

\documentclass[12pt, reqno,oneside]{amsart} 

\usepackage[utf8]{inputenc} 
\usepackage[T2A]{fontenc}
\usepackage[english]{babel}
\usepackage{amsfonts, amsmath, amscd, amssymb,amsthm}
\usepackage{xcolor}
\usepackage[all,cmtip]{xy}
\allowdisplaybreaks

\usepackage{enumitem}
\usepackage{geometry} 
\usepackage{graphicx} 
\usepackage{hyperref}
\hypersetup{pdfstartview=FitH,  linkcolor=blue,urlcolor=urlcolor, citecolor=blue, colorlinks=true}

\newtheorem{theorem}[subsection]{Theorem}
\newtheorem{lemma}[subsection]{Lemma}

\newtheorem{example}[subsection]{Example}
\newtheorem{definition}[subsection]{Definition}

\makeatletter
\@addtoreset{equation}{section}
\@addtoreset{figure}{section}
\@addtoreset{table}{section}
\makeatother



\newcommand{\str}{Z}
\newcommand\RR{\mathbb{R}}

\newcommand\NN{\mathbb{N}}

\newcommand\CC{\mathbb{C}}

\newcommand\Int[1]{\mathrm{Int}(#1)}

\newcommand{\ZZ}{\mathbb{Z}}

\newcommand{\id}{\mathrm{id}}

\newcommand\grp[2]{\varPi_1(#1,#2)}
\newcommand\grpXXp{\grp{\oX}{\qX}}
\newcommand\grpYYp{\grp{\oY}{\qY}}
\newcommand\grpXlXpl{\grp{\oX_{\lambda}}{\qX_{\lambda}}}

\newcommand\CIX{C\bigl( (I,\partial I), (\oX, \qX)\bigr)}
\newcommand\ecl[1]{[#1]}

\newcommand\eprj[1]{r_{#1}}
\newcommand\eprjX{\eprj{\oX}}
\newcommand\eprjY{\eprj{\oY}}

\newcommand\incl[1]{j_{#1}}
\newcommand\inclGrp[1]{\Pi_1(\incl{#1})}

\newcommand\UnionLambda{\mathop{\sqcup}\limits_{\lambda\in\Lambda}}
\newcommand\UnionJK{\mathop{\sqcup}\limits_{(j,k)\in\Lambda^2}}
\newcommand\UnionA{\mathop{\sqcup}\limits_{\alpha\in A}}

\newcommand\cutset{\str'}
\newcommand\cutGset{G'}

\newcommand\eps{\varepsilon}

\newcommand\catObj[1]{\mathrm{Ob}(#1)}
\newcommand\catHom[3]{\mathrm{Hom}_{#1}(#2,#3)}

\newcommand\catC{\mathfrak{C}}
\newcommand\catCObj{\catObj{\catC}}
\newcommand\catCHom[2]{\catHom{\catC}{#1}{#2}}

\newcommand\catD{\mathfrak{D}}
\newcommand\catDObj{\catObj{\catD}}
\newcommand\catDHom[2]{\catHom{\catD}{#1}{#2}}

\newcommand\oX{X}
\newcommand\oY{Y}
\newcommand\oZ{Z}
\newcommand\oW{W}

\newcommand\qX{\oX'}
\newcommand\qY{\oY'}

\newcommand\Fol{\mathcal{F}}
\newcommand\leaf{\gamma}
\newcommand\sat[1]{\mathrm{Sat}(#1)}
\newcommand\nb[1]{N_{#1}}
\newcommand\seam[1]{\omega_{#1}}
\newcommand\nbs[1]{\nb{\seam{#1}}}
\newcommand\nbStr[1]{\nb{S_{#1}}}

\newcommand\lb[1]{L_{#1}}
\newcommand\lbs[1]{\lb{#1}}
\newcommand\lbStr[1]{\lb{s_{#1}}}

\newcommand\Ucov[1]{U_{#1}}
\newcommand\zUcov[1]{U'_{#1}}

\newcommand\Vcov[1]{V_{#1}}
\newcommand\zVcov[1]{V'_{#1}}

\newcommand\grpUcov[1]{\grp{\Ucov{#1}}{\zUcov{#1}}}
\newcommand\grpVcov[1]{\grp{\Vcov{#1}}{\zVcov{#1}}}
\newcommand\coverU{\mathcal{U}}
\newcommand\coverV{\mathcal{V}}

\newcommand\indxx[1]{#1_{2}}
\newcommand\indx[1]{#1_{1}}






\makeatletter
\newcommand\testshape{family=\f@family; series=\f@series; shape=\f@shape.}
\def\myemphInternal#1{\if n\f@shape%
  \begingroup\itshape #1\endgroup\/%
  \else\begingroup\sffamily #1\endgroup%
  \fi}
\def\myemph{\futurelet\testchar\MaybeOptArgmyemph}
\def\MaybeOptArgmyemph{\ifx[\testchar \let\next\OptArgmyemph
  \else \let\next\NoOptArgmyemph \fi \next}
\def\OptArgmyemph[#1]#2{\index{#1}\myemphInternal{#2}}
\def\NoOptArgmyemph#1{\myemphInternal{#1}}
\makeatother


\title{Fundamental groupoids and homotopy types of non-compact surfaces}
\author{Sergiy Maksymenko}
\address{Institute of Mathematics of NAS of Ukraine}
\email{maks@imath.kiev.ua}
\author{Oleksii Nikitchenko}
\address{National Technical University of Ukraine
``Igor Sikorsky Kyiv Polytechnic Institute''}
\email{legionsun@gmail.com}

\begin{document}
\maketitle

\begin{abstract}
The paper contains an application of van Kampen theorem for groupoids to computation of homotopy types of certain class of non-compact foliated surfaces obtained by at most countably many strips $\mathbb{R}\times(0,1)$ with boundary intervals in $\mathbb{R}\times\{\pm1\}$ along some of those intervals.
\end{abstract}

\section{Introduction}
The present paper is devoted to applications of van Kampen theorem for groupoids to computation of homotopy types of a certain class of non-compact foliated surfaces called \emph{striped surfaces}.

It was mentioned by M.~Morse that a smooth function $f$ with non-degenerate critical points (a Morse function) on a compact manifold $\str$ ``contains'' a lot of homological information about the manifold itself (the famous Morse inequalities).
In particular, if $\dim M = 2$, so $\str$ is a ``surface'', one can even determine the topological type of $\str$ by the numbers of critical points of distinct indices via any Morse function $f:M \to \RR$.
Motivated by study functions of complex variable, in particular, harmonic functions being real and imaginary parts of holomorphic functions, Morse extended his observations in the book~\cite{Morse:TopMeth:1947} to pseudoharmonic functions $f$ defined on compact domains $\str$ in the complex plane $\CC$.

By definition, a pseudoharmonic function $f:\CC \supset \str\to\RR$ is locally a composition $g\circ h$, where $g$ is a harmonic function, and $h$ is a homeomorphism of $\CC$.
Such a function is continuous, all its critical (in a proper sense) points belonging to the interior of $\str$ are isolated and are not local extremes (due to maximum principle for holomorphic functions).
Moreover, it is assumed that the restriction of $f$ to $\partial M$ has only finitely many local minimums and maximums.
Absence of local extremes in the interior of $\str$ implies (by Jordan curve theorem) that the foliation of $\str$ into connected components of level sets of $f$ has no closed curves.

W.~Kaplan~\cite{Kaplan:DJM:1940, Kaplan:DJM:1941} characterized such foliation for pseudoharmonic function on $\RR^2$.
Namely, he shown that every foliation $\Fol$ on the plane $\RR^2$ has the following properties:
\begin{enumerate}[label={\rm(\arabic*)}]
\item\label{enum:Kaplan:1}
every leaf $\omega$ of $\Fol$ is an image of a proper embedding $\omega:\RR \to \RR^2$, so $\lim\limits_{t\to\pm\infty}\omega(t) = \infty$;

\item\label{enum:Kaplan:2}
there exists at most countably many leaves $\{\omega_i\}_{i\in\Lambda}$ of $\Fol$, such that $\RR^2\setminus\mathop{\cup}\limits_{i\in\Lambda} \omega_i$ is a disjoint union of ``open strips''  $\RR\times(0,1)$ foliated into lines $\RR\times t$, $t\in(0,1)$;

\item\label{enum:Kaplan:3}
there exists a pseudoharmonic function $f:\RR^2\to\RR$ without critical points such that $\Fol$ is a partition of $\RR^2$ into connected components of level-sets of $f$.
\end{enumerate}

Property~\ref{enum:Kaplan:3} is in a spirit of de Rham theory: if $\Fol$ is smooth, then it is defined by some closed differential $1$-form, and since $\RR^2$ is contractible, $\omega = df$ for some function $f$ satisfying~\ref{enum:Kaplan:3}.

It also gives a certain connection between foliations on surfaces and pseudoharmonic functions on the plane.
Let $\Fol'$ be a foliation on a connected surface $\str'$ without boundary distinct from $2$-sphere and projective plane, and $p:\str\to\str'$ be the universal covering map.
Then we get a well-defined foliation $\Fol$ on $\str$ whose leaves are connected components of the inverses under $p$ of leaves of $\Fol'$, and the group $\pi_1\str'$ of covering transformations interchanges the leaves of $\Fol$. 
By~\cite[Corollary~1.8]{Epstein:AM:1966} $\str$ is homeomorphic to $\RR^2$, whence $\Fol$ satisfies~\ref{enum:Kaplan:1}-\ref{enum:Kaplan:3}, while $\Fol'$ may loose all of those properties.
One may say that $\Fol'$ is obtained from a foliation on $\RR^2$ into connected components of level sets of some pseudoharminic function by some free and properly discontinuous action of $\pi_1\str$.
In general, such a function is not invariant with respect to the action of $\pi_1\str$.

Notice also that property~\ref{enum:Kaplan:2} proposes a certain classification of foliations on $\RR^2$ by studying the way in which such a foliation is glued from open strips.
This ``combinatorics of gluing'' would give certain topological invariants of (pseudo)harmonic functions and potentially information about symmetries of differential equations whose coefficients are harmonic functions.
Moreover, one could try to extend Kaplan's technique to arbitrary pseudoharmonic functions $f:\RR^2\to\RR$ just by removing the set of singular points $\Sigma_f$ of $f$ and consider the remained foliation on $\RR^2\setminus\Sigma_f$.
Such an approach was used in the papers by W.~Boothby~\cite{Boothby:AJM_1:1951, Boothby:AJM_2:1951}, M.~Morse and J.~Jenkins~\cite{JenkinsMorse:AJM:1952}, M.~Morse~\cite{Morse:FM:1952}.

Kaplan tried to minimize the total number of leaves, and for that reason his construction was not ``canonical''.
Namely, the choice of leaves $\{\omega_i\}_{i\in\Lambda}$ (and thus a ``cutting of the foliation $\Fol$'' into open strips) was not unique.
Moreover, if $S$ is a connected component of $\RR^2\setminus\mathop{\cup}\limits_{i\in\Lambda} \omega_i$, then the homeomorphism $S \to \RR\times(0,1)$ does not necessarily extend to an embedding $\overline{S} \to \RR\times[0,1]$.

In a series of papers by S.~Maksymenko, Ye.~Polulyakh~\cite{MaksymenkoPolulyakh:PIGC:2015, MaksymenkoPolulyakh:PIGC:2016, MaksymenkoPolulyakh:MFAT:2016, MaksymenkoPolulyakh:PIGC:2017} and Yu.~Soroka \cite{Soroka:MFAT:2016, Soroka:UMJ:2017} it was studied a class of foliations on non-compact surfaces $\str$ (called striped) glued from open strips in a certain ``canonical'' way.
In a joint paper~\cite{MaksymenkoPolulyakhSoroka:PIGC:2017} of the above three authors it was also described an analogue of mapping class group for foliated homeomorphisms (sending leaves into leaves) of such foliations $\Fol$ and proved that it is isomorphic with an automorphism group of a certain graph (one-dimensional CW-complex) $G$ encoding an information about gluing a surface from strips.
This graph is in a certain sense \emph{dual} to the space of leaves of $\Fol$ .

The aim of the present paper is to prove that the connection between a striped surface $\str$ and its graph $G$ is more deep: namely they are homotopy equivalent, see Theorems~\ref{th:phi_Pi1_iso} and~\ref{th:phi_Pi1_iso_groupoids}.
One of the difficulties of proving such a result is that there is no canonical map $\str \to G$.
We construct a continuous injection $\varphi: G \to \str$, and then prove (using van Kampen theorem for groupoids established by R.~Brown and A.~Salleh~\cite{BrownSalleh:AM:1984}) that $\varphi$ induces an isomorphism of fundamental groupoids of $G$ and $\str$.
This will imply that $\varphi$ is a homotopy equivalence, since $G$ and $\str$ are aspherical.
For instance the ranks of their homology groups $H_1(\str,\ZZ)$ and $H_1(G,\ZZ)$ are the same.

In fact, the result is rather simple when $\str$ is glued of finitely many strips: in this case the above map $\varphi: G \to \str$ is an embedding and its image $\varphi(G)$ is a strong deformation retract of $\str$.
On the other hand, if the number of strips is infinite, $G$ can be not a locally finite CW-complex, having thus no countable local bases at some vertices.
Therefore there will be no embeddings of $G$ into a manifold $\str$, thus the image $\varphi(G)$ will not be a strong deformation retract of $\str$.
Nevertheless, van Kampen theorem allows to accomplish the result.

Actually, the obtained result has no deal with a foliation itself but only with a way in which a surface is glued from strips.
Nevertheless, suppose we are given a foliation $\Fol$ on a non-compact surface $\str$ whose leaves are non-compact closed subsets of $\str$.
Now, if $\Fol$ has ``not so much singular leaves'', see Theorem~\ref{th:char_strip_surf} below and Figure~\ref{fig:xy}, then it is a striped surface.
Therefore we get a partition into strips and our result shows that the foliation ``contains'' an information about the homotopy type of the underlying surface.
Thus our statement could be viewed in the frame of Morse theory in which the gradient lines connecting critical points of a Morse function $f:\str\to\RR$ (or equivalently decomposition of $\str$ into handles in the sense of S.~Smale corresponding to those critical points) determine a CW-partition of $\str$ and this relates $f$ with homological and even topological structure of $\str$.

The exposition of the paper is intended to be elementary in order to make it accessible to a large audience of readers, and thus to propagate and popularize usage of  homotopy methods (like van Kampen theorem for groupoids) to more applied problems.

\section{Striped surface and its graph}

\begin{definition}[\cite{MaksymenkoPolulyakh:PIGC:2015}]
A subset $S \subset \RR \times [-1,1]$ will be called a \myemph{model strip} if:
\begin{enumerate}[label={\rm\arabic*)}]
\item\label{enum:strip_surf:1} $\RR \times (-1,1) \subset S$;
\item\label{enum:strip_surf:2} the intersection $S\cap(\RR \times \{\pm1\})$ is a union of at most countably many mutually disjoint open intervals.
\end{enumerate}
\end{definition}
For instance, $\RR \times (-1,1)$, $\RR \times [-1,1]$,  $\bigl( \RR \times (-1,1) \bigr) \cup \bigl( (-2,3)\times \{1\} \bigr)$ are model strips.
Of course one can replace $[-1,1]$ with any other closed segment $[a,b]\subset\RR$.

Notice that condition~\ref{enum:strip_surf:2} is equivalent to the assumption that $S$ is open as a subset of $\RR \times [-1,1]$.
Define the following subsets of $S$:
\begin{align*}
\ \partial_{-}S &:= S\cap(\RR \times \{-1\}), &
\ \partial_{+}S &:= S\cap(\RR \times  \{1\}),  &
\ \partial S    &:= \partial_{-}S\cup \partial_{+}S.
\end{align*}

\begin{definition}[\cite{MaksymenkoPolulyakh:PIGC:2015}]
Let $\str$ be a two-dimensional manifold.
A \myemph{striped atlas} on $\str$ is a map $q: \str_0 \to \str$ having the following properties:
\begin{enumerate}[label={\rm(\arabic*)}, leftmargin=*, itemsep=1ex]
\item $\str_0 = \UnionA S_{\alpha}$ is at most countable disjoint union of model strips;
\item $q$ is a quotient map, so a subset $U\subset\str$ is open iff $q^{-1}(U)$ is open in $\str_0$;
\item there are two disjoint families $X = \{X_\beta\}_{\beta \in B}$, $Y = \{Y_\beta\}_{\beta \in B} \subset \bigcup\limits_{\alpha \in A}\partial S_{\alpha}$ of boundary intervals such that:
\begin{enumerate}[label={\rm\alph*)}, itemsep=1ex, topsep=1ex]
\item $q$ is an injective on $\str_0 \setminus (X\cup Y)$;
\item $q(X_\beta)=q(Y_\beta)$ for each $\beta \in B$ and the restrictions $q|_{X_\beta}: X_\beta \to q(X_\beta)$ and $q|_{Y_\beta}: Y_\beta \to q(Y_\beta)$ are embeddings;
\item $q(X_\beta)\cap q(X_{\beta^{\prime}})= \varnothing$ for each $\beta \neq \beta^\prime \in B$;
\item $q(X_\beta)\cap q(\str_0 \setminus (X\cup Y))=\varnothing$ for each $\beta \in B$.
\end{enumerate}
\end{enumerate}
\end{definition}

Figures~\ref{fig:striped_atlas_2}, \ref{fig:xy}, \ref{fig:three_strips} contain examples of striped atlases.

The pair $(\str, q)$ will also be called a \myemph{striped structure} on $\str$.
A \myemph{striped surface} is a surface admitting a striped atlas.
When talking about a striped surface $\str$ we will also assume that some striped atlas $(\str, q)$ is fixed.
Notice that a striped surface is a non-compact two-dimensional manifold which can be non-orientable and disconnected.
Moreover, each connected component of its boundary is an open interval.

\subsection{Seams}
Let $\beta \in B$.
Then we have a homeomorphism $\gamma_\beta: Y_\beta \to X_\beta$ given by $\gamma_\beta = (q|_{X_\beta})^{-1}\circ q|_{Y_\beta}$.
Therefore, a striped surface $\str$ can also be regarded as a quotient space obtained by gluing some pairs of boundary intervals of model strips via homeomorphisms $\{\gamma_\beta\}_{\beta \in B}$.
The image
\[
    \seam{\beta} := q(X_{\beta})=q(Y_{\beta})
\]
will be called a \myemph{seam} of $\str$ (as well as of $q$).
In Figure~\ref{fig:striped_atlas_2} seams are colored in red color.

Since $q^{-1}(\seam{\beta}) = X_{\beta} \cup Y_{\beta}$ is closed in $\str_0$, and $q$ is a quotient map, it follows that each seam is a closed subset of $\str$.

\begin{figure}[htbp!]
\includegraphics[width=0.9\textwidth]{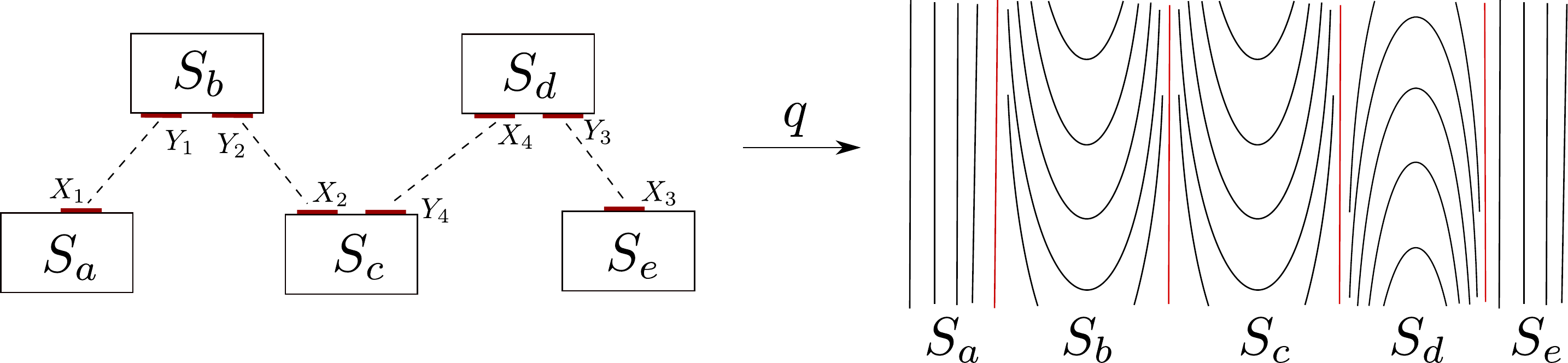} \\ (a)
\\[4mm]
\includegraphics[width=0.9\textwidth]{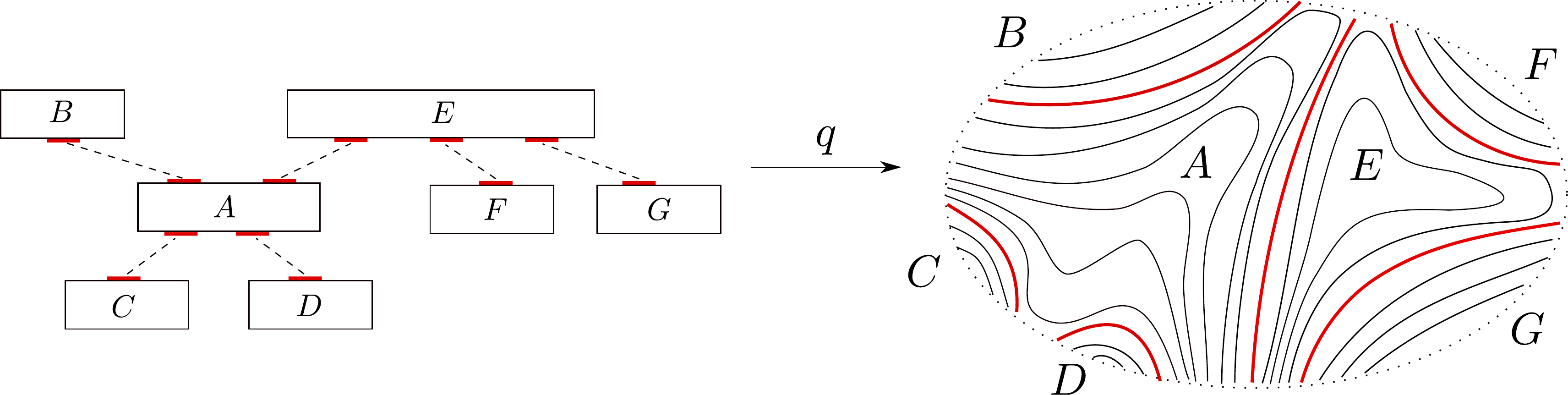} \\  (b)
\caption{Striped atlases}\label{fig:striped_atlas_2}
\end{figure}

\subsection{Foliated characterization of striped surfaces}
Though the notion of a striped surface looks rather restrictive, it nevertheless covers a large class of surfaces equipped with foliations which agree in a certain sense with a decomposition into strips.
We recall here the principal statement from~\cite{MaksymenkoPolulyakh:PIGC:2017}.
It will not be used for the proof of our main result, however we present it to describe the general picture.

By a foliated surface $(\str,\Fol)$ we will mean a two-dimensional manifold $\str$ equipped with a foliation $\Fol$.
A \myemph{saturation} $\sat{U}$ of a subset $U\subset \str$ is the union of all leaves of $\Fol$ intersecting $U$, that is
\[
\sat{U} := \mathop{\cup}_{\gamma\cap U\not=\varnothing} \gamma
\]
A subset $U \subset \str$ is \myemph{saturated} whenever $U = \sat{U}$ i.e. $U$ is a union of leaves of $\Fol$.

Given an open subset $U\subset Z$, denote by $\Fol|_{U}$ the foliation on $U$ consisting of connected components of the intersections of leaves of $\Fol$ with $U$.
We will call $\Fol|_{U}$ the \myemph{restriction of $\Fol$} to $U$.

A homeomorphism $h:\str \to \str'$ between foliated surfaces $(\str,\Fol)$ and $(\str\,\Fol')$ is \myemph{foliated} whenever for each leaf $\leaf$ of $\Fol$ its image $h(\leaf)$ is a leaf of $\Fol'$.

Notice that each model strip $S$ admits a natural foliation into boundary intervals and lines $\RR\times\{t\}$, $t\in(-1,1)$.
We will call this foliation \myemph{canonical}.
Now if $(\str,q)$ is a striped surface, then canonical foliations on the corresponding model strips induce a foliation on all of $\str$ which we will also call the \myemph{canonical foliation (of the striped atlas $q$)}.

Let $(\str,\Fol)$ be a foliated surface.
Say that a leaf $\leaf \subset \Int{\str}$, resp.\ $\leaf \subset \partial\str$, of $\Fol$ is \myemph{regular} if there exists an open saturated neighborhood $U$ of $\gamma$ such that the pair $(\overline{U}, U)$ is foliated homeomorphic to the model strip $\bigl( \RR\times[-1,1], \RR\times(-1,1) \bigr)$,  resp.~$\bigl( \RR\times[0,1], \RR\times[0,1) \bigr)$, via a homeomorphism sending $\leaf$ to $\RR\times 0$.

A leaf which is not regular will be called \myemph{singular}.
For example, in the above figures, the seams (red leaves) are precisely singular leaves.

For the case of striped surfaces and its canonical foliation, it is easy to see that a leaf $\leaf$ is singular if and only if it satisfies one of the following conditions:
\begin{enumerate}[label={\rm\roman*)},leftmargin=*]
\item\label{enum:sing:1}
$\leaf = q(\delta)$, for some boundary interval $\delta = (a,b)\times \eps 1 \subset \partial_{\eps}S_{\alpha}$, where $\eps\in\{\pm\}$ and $\alpha \in A$;

\item\label{enum:sing:2}
$\partial_{\eps}S_{\alpha}$ contains some other boundary intervals distinct from $\delta$.
\end{enumerate}

\begin{theorem}[{\cite[Theorem~4.4]{MaksymenkoPolulyakh:PIGC:2017}}]\label{th:char_strip_surf}
Let $(\str,\Fol)$ be a foliated surface such that every leaf $\leaf$ of $\Fol$ is a non-compact closed subset of $\str$.
Then the following conditions are equivalent:
\begin{enumerate}[label={\rm\arabic*)}, leftmargin=*]
\item
$\str$ admits a striped atlas $q$ such that $\Fol$ coincides with the canonical foliation of $q$;

\item
the collection of all singular leaves of $\Fol$ is locally finite.
\end{enumerate}
\end{theorem}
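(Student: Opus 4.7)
For the easy direction $(1) \Rightarrow (2)$, I would verify local finiteness pointwise. Fix $z \in \str$ and take any preimage $w$ of $z$ under the striped atlas $q$, with $w \in S_\alpha$. If $w \in \RR \times (-1,1)$, a small open rectangle about $w$ bounded away from $\RR \times \{\pm 1\}$ is disjoint from every boundary interval, hence its $q$-image meets no singular leaf. If instead $w$ lies on a boundary interval $\delta \subset \partial_{\eps}S_{\alpha}$, shrink $\delta$ to a subinterval $I \ni w$ with $\overline{I} \subset \delta$; the half-rectangle $I \times (1-\vep, 1]$ (for $\eps = +$) lies entirely in $S_\alpha$, and its $q$-image is a neighborhood of $q(w)$ meeting only the singular leaf $q(\delta)$. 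Since $z$ has at most two $q$-preimages, combining such neighborhoods yields a neighborhood of $z$ meeting only finitely many singular leaves.

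For the converse $(2) \Rightarrow (1)$, the strategy is constructive. Local finiteness together with closedness of each $L_i$ implies that $\Sigma := \bigcup_i L_i$ is closed, so $V := \str \setminus \Sigma$ is open and saturated, and $\Fol|_V$ consists entirely of regular leaves of $\Fol$. Let $\{V_\alpha\}_{\alpha \in A}$ be the connected components of $V$, with $A$ at most countable by second countability of $\str$. The main technical step is to show that each $V_\alpha$ is foliated-homeomorphic to the interior of a model strip, namely $\RR \times (-1,1)$ together with (possibly) some open intervals of $\RR \times \{\pm 1\}$ corresponding to regular leaves of $\Fol$ lying in $\partial \str$. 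Granting this, local finiteness forces each singular leaf $L_i$ to lie in the topological frontier of at most two components, approached from one side of each; these sides contribute the paired boundary intervals $X_\beta$ and $Y_\beta$ to be glued across $L_i$. Setting $\str_0 := \bigsqcup_\alpha S_\alpha$ and extending the parametrizations over the boundary intervals yields the desired striped atlas whose canonical foliation equals $\Fol$.

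The main obstacle is producing the global strip parametrization of each $V_\alpha$. I would begin by covering $V_\alpha$ with open saturated flow-box rectangles of the form $\RR \times (a,b)$ provided by regularity, then chain them along overlapping leaves. The non-compactness and closedness of leaves ensure every leaf is a properly embedded copy of $\RR$, ruling out branching and one-sided accumulation, while the absence of singular leaves in $V_\alpha$ precludes Reeb-type components. These features imply that the leaf space of $V_\alpha$ is a Hausdorff one-dimensional manifold, hence an open interval, which supplies the transverse coordinate $t \in (-1,1)$; a continuous global transversal then supplies the longitudinal coordinate on $\RR$. With this parametrization in hand, verification of the axioms of a striped atlas reduces to bookkeeping governed by the local finiteness hypothesis.
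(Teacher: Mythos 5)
The paper does not actually prove Theorem~\ref{th:char_strip_surf}: it is imported verbatim from~\cite{MaksymenkoPolulyakh:PIGC:2017} (Theorem~4.4 there) and the authors state explicitly that it is not used in the proof of their main results. So there is no in-paper argument to compare your proposal with, and it has to be judged on its own merits. Your direction $(1)\Rightarrow(2)$ is essentially fine as a sketch (one small repair: when $\delta$ is a glued interval, the image of a single half-rectangle is not open in $\str$; you must choose the two subintervals on $X_\beta$ and $Y_\beta$ to correspond under the gluing homeomorphism before taking the union).

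The hard direction $(2)\Rightarrow(1)$ has a genuine gap, and it sits exactly at your ``main technical step''. The claim that every connected component $V_\alpha$ of the complement of the singular leaves is foliated-homeomorphic to the interior of a model strip is false. Take $\str=S^1\times\RR$ foliated by the fibers $\{\theta\}\times\RR$: every leaf is closed, non-compact and regular (for $U=J\times\RR$ with $J$ a small open arc, the pair $(\overline{U},U)$ is foliated homeomorphic to $\bigl(\RR\times[-1,1],\RR\times(-1,1)\bigr)$), so there are no singular leaves at all, and $V_\alpha$ is the whole annulus --- not simply connected, with leaf space a circle rather than an interval. The theorem is still true here (one strip whose two full boundary lines are glued along a \emph{regular} leaf), which shows that a striped atlas cannot in general be obtained by cutting only along singular leaves: when a component has circular leaf space (annulus or open M\"obius band type) you must additionally cut along a suitably chosen regular leaf, and your plan has no mechanism for producing or justifying such cuts.

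Related to this, the assertion that ``non-compactness and closedness of leaves'' force the leaf space of $V_\alpha$ to be a Hausdorff one-dimensional manifold, hence an open interval, is unsupported on three counts. Closedness and non-compactness of all leaves do \emph{not} exclude non-separated leaves: in $\RR^2\setminus\{(0,0)\}$ with the horizontal foliation all leaves are closed and non-compact, yet the two half-axes are non-separated; what must be used is precisely the regularity of the leaves of $\Fol|_{V_\alpha}$ (the condition on the pair $(\overline{U},U)$), and that separation argument is missing from your sketch. Even granting Hausdorffness, a connected Hausdorff $1$-manifold may be a circle, as above. And even with an interval leaf space, trivializing the foliation over it --- producing a global transversal and a fiber-respecting homeomorphism onto $\RR\times(0,1)$, and then analyzing, via local finiteness, how each side of a singular leaf is approached by exactly one boundary interval of an adjacent strip --- is the actual technical content of the cited theorem and of the earlier Maksymenko--Polulyakh results it rests on; in your write-up this is compressed into ``bookkeeping''. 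As it stands, the proposal is a plausible outline of the easy half plus an incorrect reduction for the hard half, not a proof.
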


In fact, not every foliation on the plane admits a striped atlas, see~\cite[Examples~7.6, 7.7]{MaksymenkoPolulyakhSoroka:PIGC:2017}.
One of the reasons is that seams can converge to other seams.
We will briefly recall \cite[Example~7.6]{MaksymenkoPolulyakhSoroka:PIGC:2017}.
Let $\Fol$ be the foliation on $\RR^2$ into parallel lines $\RR\times t$, $t\in\RR$.
Let also $\{a_n\}_{n\in\NN} \RR$ be a strictly decreasing sequence of reals converging to some $a\in \RR$, and $K = \{a\} \cup \{a_n\}_{n\in\NN}$.
Consider the open subset $\str = \RR^2 \setminus (0\times K)$ and let $\Fol|_{\str}$ be the restriction of $\Fol$ to $\str$.
Notice that every point $b \in K$ splits $\RR\times b$ into two arcs $\leaf^{-}_b = (-\infty, 0) \times b$ and $\leaf^{+}_b = (0, +\infty) \times b$ being the leaves of $\Fol|_{\str}$.
By property~\ref{enum:sing:2} the latter leaves are singular for $\Fol|_{\str}$ and they converse to the leaves $\leaf^{-}_a$ and $\leaf^{+}_a$.
Hence the family of all singular leaves of $\Fol|_{\str}$ is not locally finite, and by Theorem~\ref{th:char_strip_surf} $(\str,\Fol|_{\str})$ does not admit a striped atlas.

\subsection{Graph of a striped surface}
The above figures propose to consider a graph which encodes the gluing of strips.
Such a graph was introduced in~\cite[\S6]{MaksymenkoPolulyakhSoroka:PIGC:2017} and also takes to account boundary components of striped surface which are not seams.
We will consider here a more simplified version of it.

Let $q: \UnionA  S_{\alpha} \to\str$ be a striped atlas on a surface $\str$.
Then one can associate to $q$ a one-dimensional CW-complex (``topological graph'') $G$ whose vertices are strips of the atlas and the edges correspond to gluing strips along boundary intervals.
More precisely,
\begin{enumerate}[label={\rm\arabic*)}, start=0]
\item
$0$-skeleton of $G$ is $G^{(0)} = A$;

\item
Let $\beta\in B$, so we have a pair of boundary intervals $\{X_\beta, Y_\beta \}$ such that $X_\beta \subset \partial S_{\alpha}$ and $Y_\beta \subset \partial S_{\alpha'}$ for some \myemph{vertices} $\alpha,\alpha'\in A$.
Let also $I_{\beta}=[-1,1]$.
Glue $I_{\beta}$ to $A$ via the following map:
\[
    \chi_{\beta}: \partial I_{\beta}= \{\pm1\} \to A,
    \qquad
    \chi_{\beta}(-1)=\alpha,
    \qquad
    \chi_{\beta}(1)=\alpha'.
\]
\end{enumerate}
Then the resulting CW-complex:
\[
G = \Bigl(\, \mathop{\sqcup}\limits_{\beta\in B} I_{\beta} \Bigr) \bigcup_{\chi_{\beta}, \ \beta\in B}  A.
\]
will be called the \myemph{graph of the striped atlas $q$}.
We will also denote by the same letter $\chi_{\beta}: I_{\beta} \to G$ the characteristic map of the $1$-cell $I_{\beta}$.
Thus
\begin{itemize}[leftmargin=5ex]
\item
$\chi_{\beta}|_{(-1,1)}: (-1,1) \to G$ is an embedding and we will denote by $e_{\beta} = \chi_{\beta}\bigl( (-1,1) \bigr)$ its image being an open $1$-cells;
\item
$\chi_{\beta}|_{\partial I_{\beta}}: \partial I_{\beta} \to A$ is the corresponding gluing map.
\end{itemize}

\subsection{Canonical injection $\varphi:G \to \str$}\label{sect:inject_G_Z}

We will construct here a continuous injective map $\varphi:G \to \str$.

For each $\alpha \in A$ let $s_{\alpha} := (0, 0) \in \RR\times(-1,1) \subset S_{\alpha}$ be the origin.
Define the map $\varphi:G^{(0)} \to \str$ by $h(\alpha) = s_{\alpha}$.

Furthermore, for each $\beta\in B$ fix a point $z_{\beta} \in \seam{\beta} = q(X_{\beta}) = q(Y_{\beta})$, and let
\begin{align}\label{equ:xbeta_ybeta}
    (x_{\beta}, \eps) &= q^{-1}(z_{\beta}) \cap X_{\beta},  &
    (y_{\beta}, \eps') &= q^{-1}(z_{\beta}) \cap Y_{\beta},
\end{align}
where $\eps,\eps'\in\{\pm1\}$.
Assume that $X_{\beta} \subset S_{\alpha}$ and $Y_{\beta} \subset S_{\alpha'}$ for some $\alpha,\alpha'\in A$.

Now define the following path $\varphi_{\beta}:I_{\beta} \equiv [-1,1] \to \str$ by the following formula. see Figure~\ref{fig:path_h}:
\[
\varphi_{\beta}(t) =
\begin{cases}
    q\bigl( 2(1+t) x_{\beta}, (1+t) \eps \bigr),  &  t \in [-1,-\tfrac{1}{2}], \\
    q\bigl( x_{\beta},        (1+t) \eps \bigr),  &  t \in [-\tfrac{1}{2}, 0], \\
    q\bigl( y_{\beta},        (1-t) \eps' \bigr), &  t \in [0, \tfrac{1}{2}], \\
    q\bigl( 2(1-t)y_{\beta},  (1-t) \eps' \bigr), &  t \in [\tfrac{1}{2}, 1].
\end{cases}
\]
It easily follows that
\begin{align*}
\varphi_{\beta}(-1) &=  h(\alpha) = s_{\alpha}, &
\varphi_{\beta}(0)  &= z_{\beta}, &
\varphi_{\beta}(1)  &=  h(\alpha') = s_{\alpha'}.
\end{align*}

Moreover, $\varphi_{\beta}$ is a simple path if $\alpha\not=\alpha'$, and a simple closed path if $\alpha=\alpha'$.

Since the paths $\varphi_{\beta}$ agree with $\varphi$ at end-points, they induce the required map $\varphi:G \to \str$.

\begin{figure}[htbp!]
\includegraphics[width=0.8\textwidth]{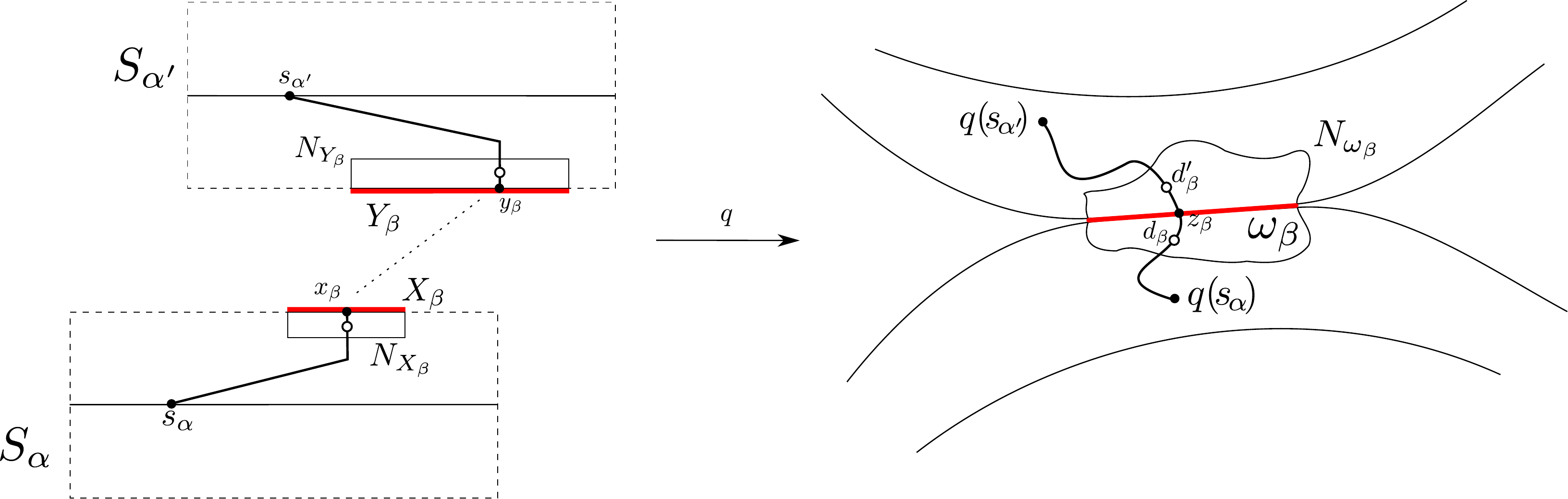}
\caption{The map $\varphi_{\beta}$}\label{fig:path_h}
\end{figure}

\begin{figure}[htbp!]
\includegraphics[width=0.95\textwidth]{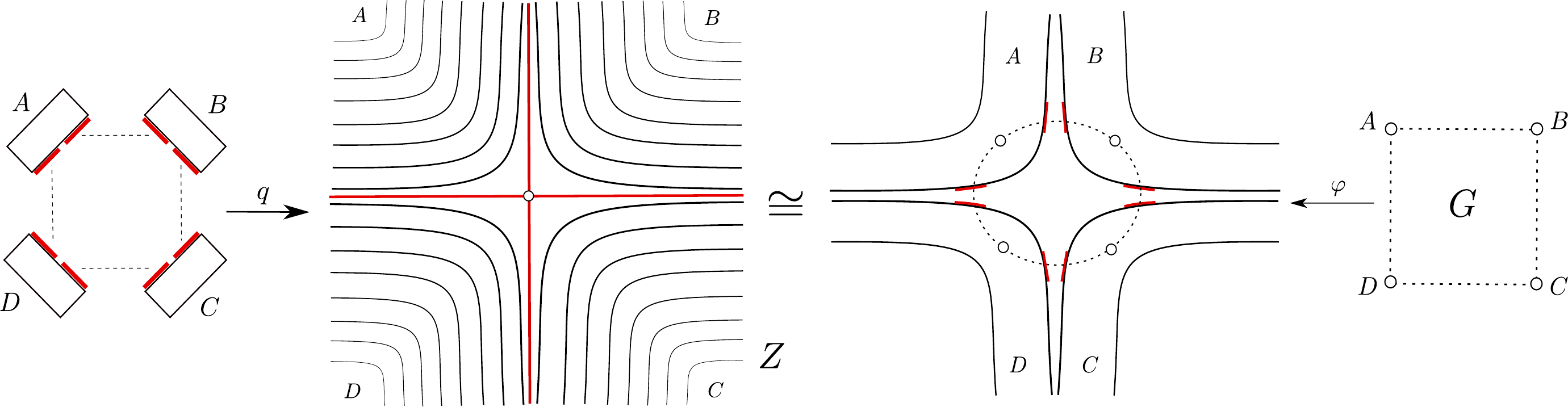}
\caption{Striped structure and the map $\varphi$ on $\RR^2\setminus(0,0)$ for canonical foliation by level sets of the function $f(x,y)=xy$ }\label{fig:xy}
\end{figure}

\begin{figure}[htbp!]
\includegraphics[width=0.8\textwidth]{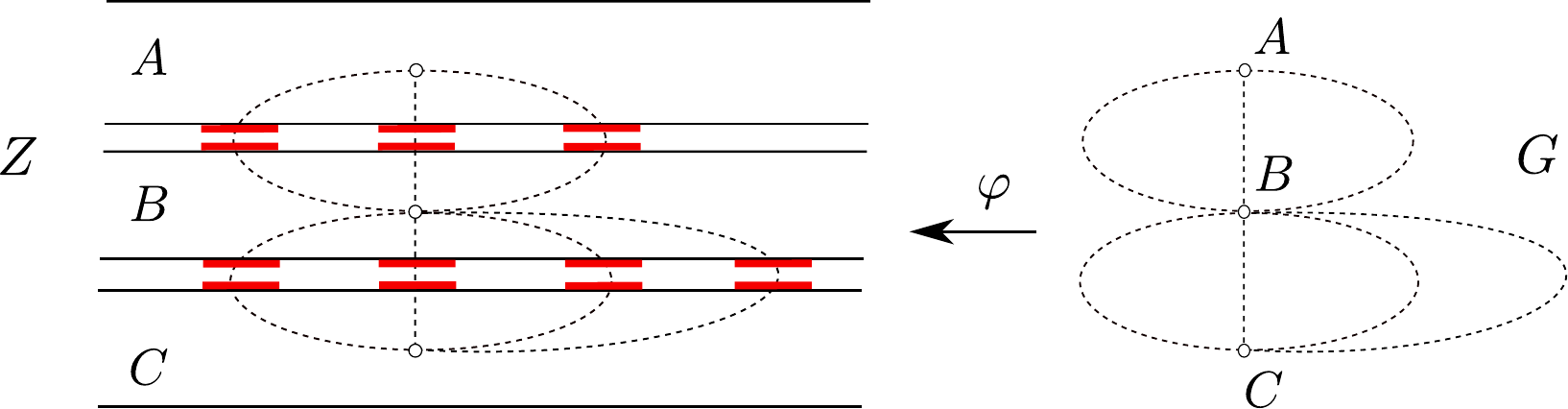}
\caption{}\label{fig:three_strips}
\end{figure}

The aim of the present paper is to prove the following
\begin{theorem}\label{th:phi_Pi1_iso}
Let $(\str,q)$ be a striped surface and $G$ be its graph.
Then for each $x\in G$ the above map $\varphi: G\hookrightarrow \str$ induces an isomorphism of the fundamental groups
\[ \varphi_{*}:\pi_1(G,x)\to \pi_{1}\bigl(\str, \varphi(x)\bigr). \]
In particular, $\varphi$ is a homotopy equivalence.
\end{theorem}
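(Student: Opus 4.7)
The plan is to apply the van Kampen theorem for groupoids of Brown--Salleh~\cite{BrownSalleh:AM:1984} to a pair of matched open covers $\coverU$ of $G$ and $\coverV$ of $\str$ chosen so that $\varphi$ carries the former into the latter and induces an isomorphism of fundamental groupoids on every cover element and every pairwise intersection. On the graph side I would use, for each vertex $\alpha\in A$, the open star neighborhood
\[ U_{\alpha}:=\{\alpha\}\cup\bigcup_{\chi_{\beta}(-1)=\alpha}\chi_{\beta}\bigl([-1,0)\bigr)\cup\bigcup_{\chi_{\beta}(1)=\alpha}\chi_{\beta}\bigl((0,1]\bigr), \]
and for each edge $\beta\in B$ a small open midpoint arc $U'_{\beta}:=\chi_{\beta}\bigl((-\tfrac12,\tfrac12)\bigr)$. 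All these sets are contractible, their union covers $G$, $U_{\alpha}\cap U_{\gamma}=\varnothing$ for $\alpha\ne\gamma$, $U'_{\beta}\cap U'_{\beta'}=\varnothing$ for $\beta\ne\beta'$, and each nonempty mixed intersection $U_{\alpha}\cap U'_{\beta}$ is a disjoint union of one or (in the loop case $\chi_{\beta}(-1)=\chi_{\beta}(1)=\alpha$) two open arcs; in particular all triple intersections are empty. On the surface side I match this with $V_{\alpha}$ an open contractible neighborhood of $s_{\alpha}$ whose $q$-preimage is $S_{\alpha}$ minus a small closed collar of every glued boundary interval in $\partial S_{\alpha}$, and $V'_{\beta}$ an open contractible tubular neighborhood of the seam $\seam{\beta}$ obtained as the $q$-image of a thin two-sided collar of $X_{\beta}\cup Y_{\beta}\subset S_{\alpha}\sqcup S_{\alpha'}$. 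The containments $\varphi(U_{\alpha})\subset V_{\alpha}$ and $\varphi(U'_{\beta})\subset V'_{\beta}$, together with the disjointnesses $V_{\alpha}\cap V_{\gamma}=\varnothing$ and $V'_{\beta}\cap V'_{\beta'}=\varnothing$, are immediate from the explicit formula for $\varphi_{\beta}$ once the collars are taken sufficiently thin.

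Next I would fix the set of base points. Let $P\subset G$ consist of all vertices $A$, all edge midpoints $\chi_{\beta}(0)$, and the points $\chi_{\beta}(\pm\tfrac14)$, the latter ensuring that $P$ meets every path component of every mixed intersection $U_{\alpha}\cap U'_{\beta}$; set $Q:=\varphi(P)\subset\str$. The crucial verification is that for every matched pair $(U,V)$ of cover elements (and for every matched pair of pairwise intersections) the map $\varphi$ restricts to a \emph{bijection} $P\cap U\to Q\cap V$; this follows from the global injectivity of $\varphi$ together with the disjointnesses above, by a short case check using the formula for $\varphi_{\beta}$ at the four listed types of base points. Since each $U$ and each $V$ is contractible, both $\grp{U}{P\cap U}$ and $\grp{V}{Q\cap V}$ are indiscrete groupoids on the respective base-point sets, hence the induced $\varphi_{*}$ is an isomorphism on each such piece and each intersection. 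The Brown--Salleh theorem then presents $\grp{G}{P}$ and $\grp{\str}{Q}$ as the colimit over $\coverU$, respectively $\coverV$, of these groupoids, so $\varphi_{*}\colon\grp{G}{P}\to\grp{\str}{Q}$ is an isomorphism of groupoids. Restriction to any single object yields the desired $\pi_{1}$-isomorphism. Finally, since $G$ is a $1$-dimensional CW-complex and $\str$ is a non-compact surface, both are aspherical and admit CW structures, so Whitehead's theorem upgrades $\varphi$ to a homotopy equivalence.

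The principal obstacle I anticipate is the careful construction of $V_{\alpha}$ (and verification of its openness and contractibility) at a vertex $\alpha$ incident to infinitely many edges whose glued intervals $X_{\beta},Y_{\beta}$ may accumulate inside $\partial S_{\alpha}$. This is precisely the regime in which $G$ fails to be locally finite, $\varphi(G)$ fails to be a neighborhood deformation retract of $\str$, and the classical space-level van Kampen theorem does not suffice; the groupoid version, together with the indiscreteness of the local groupoids, is what makes the argument go through. A secondary technicality is the non-seam boundary of $\str$, which can be absorbed either by enlarging each $V_{\alpha}$ to contain a collar of those boundary arcs or by first deformation-retracting $\str$ onto the complement of an open collar of $\partial\str$.
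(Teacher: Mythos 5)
Your overall strategy coincides with the paper's: matched open covers of $G$ (vertex stars and edge arcs) and of $\str$ (strip neighborhoods and seam neighborhoods) by contractible sets, base points placed in the pairwise overlaps, the Brown--Salleh groupoid van Kampen theorem applied to both sides, and the conclusion upgraded via asphericity of $G$ and $\str$ and Whitehead's theorem. However, there is a concrete gap exactly at the point you flag but do not resolve: the definition of $V_{\alpha}$. Two things go wrong. First, the glued boundary intervals may accumulate inside $\partial S_{\alpha}$ (the definition of a model strip allows, say, the intervals $(\tfrac{1}{n+1},\tfrac1n)\times\{1\}$, and such atlases do occur, e.g.\ two strips glued along these intervals, yielding an open subset of the plane). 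In that case the union of closed collars of uniform height over the glued intervals is \emph{not} closed in $S_{\alpha}$: a point such as $(0,0.95)$ belongs to $S_{\alpha}$ and is a limit of collar points, so $S_{\alpha}$ minus those collars is not open, and your $V_{\alpha}$ fails to be an open set no matter how thin the collars are. Second, the containment $\varphi(U_{\alpha})\subset V_{\alpha}$, which you call immediate, is false for \emph{every} collar thickness: your $U_{\alpha}$ contains the whole half-edge $\chi_{\beta}\bigl([-1,0)\bigr)$, and by the formula for $\varphi_{\beta}$ its image contains the points $q\bigl(x_{\beta},(1+t)\eps\bigr)$ with $t\to 0^{-}$, which approach the seam and hence enter any removed collar. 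So with your $U_{\alpha}$ and $V_{\alpha}$ the map $\varphi$ is not a morphism of covered triples and the van Kampen comparison cannot even be set up.

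Both defects are cured simultaneously by the choice made in the paper: take $V_{\alpha}:=q\bigl(S_{\alpha}\setminus(X\cup Y)\bigr)$, i.e.\ remove from $S_{\alpha}$ only the glued boundary intervals themselves, not collars around them. This set is open because the union of the boundary intervals is closed in $S_{\alpha}$ (their limit points lie on $\RR\times\{\pm1\}$ outside $S_{\alpha}$, the intervals being pairwise disjoint and open), it is contractible via the vertical retraction onto $\RR\times 0$, it is disjoint from $V_{\gamma}$ for $\gamma\neq\alpha$, and it contains $\varphi(U_{\alpha})$ for the full open star; the paper in fact takes the cover of $G$ to be $\varphi^{-1}$ of the cover of $\str$, which reproduces your $U_{\alpha}$ and $U'_{\beta}$ automatically and makes the base-point bijections immediate from injectivity of $\varphi$. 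A smaller remark: the paper's key lemma needs $\varphi$ to induce isomorphisms only on the groupoids of the cover elements (the inverse is then produced from the coequalizer universal property), whereas your route through an isomorphism of the full diagrams also requires the intersection-level isomorphisms, so the component-by-component matching in the loop case (two components, each containing exactly one base point) must be verified explicitly.
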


In fact we will establish a more precise statement
\begin{theorem}\label{th:phi_Pi1_iso_groupoids}
Let $(\str,q)$ be a striped surface and $G$ be its graph.
Then there exists a subset $\cutset \subset \str$ such that the map $\varphi: G\hookrightarrow \str$ induces an isomorphism of the corresponding fundamental groupoids
\[ \varphi^{*}:\Pi_1(G,\varphi^{-1}(\cutset))\to \Pi_{1}(\str, \cutset). \]
\end{theorem}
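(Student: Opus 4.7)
The plan is to apply the Brown-Salleh van Kampen theorem for fundamental groupoids~\cite{BrownSalleh:AM:1984} to a pair of compatible open covers: an open cover $\coverU$ of $\str$ and its pullback $\varphi^{-1}(\coverU)$ of $G$. The central idea is to choose the cover so that each element and each pairwise intersection is contractible (or a disjoint union of contractibles) on both sides, with base points matched by $\varphi$; then on every piece $\varphi$ induces an isomorphism of indiscrete groupoids, and the Brown-Salleh coequalizer assembly yields the global isomorphism $\varphi^{*}:\Pi_1(G,\varphi^{-1}(\cutset))\to\Pi_1(\str,\cutset)$.

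First I would take as base set
\[
\cutset := \{s_{\alpha}\}_{\alpha\in A} \ \cup\ \{z_{\beta}\}_{\beta\in B} \ \cup\ \{\varphi_{\beta}(-1/2),\varphi_{\beta}(1/2)\}_{\beta\in B},
\]
so that $\varphi^{-1}(\cutset) = A \cup \{\chi_{\beta}(t) : \beta\in B,\ t\in\{-1/2,0,1/2\}\}$. For each strip $\alpha\in A$ put
\[
\Ucov{\alpha} := q\Bigl( S_{\alpha}\setminus \bigcup_{X_{\beta}\cup Y_{\beta}\subset S_{\alpha}} (X_{\beta}\cup Y_{\beta}) \Bigr),
\]
which is open in $\str$ (since $q$ is injective off $X\cup Y$) and deformation retracts vertically to the arc $\RR\times\{0\}$, hence to $s_{\alpha}$; in particular $\Ucov{\alpha}$ is contractible and disjoint from every seam. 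For each seam $\beta$ I would take $\Vcov{\beta}$ to be a small open tubular neighborhood of $\seam{\beta}$, realized as the $q$-image of narrow open rectangles of vertical width $\eps\in(1/2,1)$ adjacent to $X_{\beta}$ in $S_{\alpha}$ and to $Y_{\beta}$ in $S_{\alpha'}$, glued along $X_{\beta}\leftrightarrow Y_{\beta}$; with $\eps$ small and the rectangles narrow, $\Vcov{\beta}$ is an open contractible ``tube'' containing $\seam{\beta}$ and both $\varphi_{\beta}(\pm 1/2)$, and meeting no other seam.

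Next I would verify the hypotheses of the groupoid van Kampen theorem: distinct $\Ucov{\alpha}$'s are disjoint; distinct $\Vcov{\beta}$'s are disjoint; $\Ucov{\alpha}\cap\Vcov{\beta}$ is nonempty iff $\alpha$ is an endpoint of $\beta$ in $G$, in which case it is one contractible open rectangle when $\alpha\neq\alpha'$ and a disjoint union of two such rectangles when $\beta$ is a loop at $\alpha$; triple intersections vanish; and $\cutset$ meets every path-component of every piece and every pairwise intersection. The pulled-back cover of $G$ has the analogous combinatorial structure: $\varphi^{-1}(\Ucov{\alpha})$ is $\{\alpha\}$ together with the half-edges $\chi_{\beta}([-1,0))$ or $\chi_{\beta}((0,1])$ for $\beta$ incident to $\alpha$; $\varphi^{-1}(\Vcov{\beta})$ is an open subarc of $I_{\beta}$ around $\chi_{\beta}(0)$; and their pairwise intersections are unions of open half-edges. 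Each piece on both sides is a disjoint union of contractibles whose base-point sets are in bijection under $\varphi$, so $\varphi$ induces an isomorphism of groupoids on each cover element and each pairwise intersection. The Brown-Salleh coequalizer then delivers the required global isomorphism.

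The main obstacle I expect is producing the tubes $\Vcov{\beta}$ when $G$ fails to be locally finite (countably many seams may be incident to a single vertex $\alpha$): one must invoke the 2-manifold hypothesis on $\str$ to conclude that the family $\{\seam{\beta}\}_{\beta\in B}$ is locally finite as a family of closed subsets of $\str$, so that each $\Vcov{\beta}$ can be shrunk to miss every other seam and every $s_{\alpha'}$. A secondary bookkeeping subtlety is the self-loop case, where $\Ucov{\alpha}\cap\Vcov{\beta}$ has two path-components; this is precisely why I include both $\varphi_{\beta}(-1/2)$ and $\varphi_{\beta}(1/2)$ in $\cutset$ rather than only the midpoint $z_{\beta}$.
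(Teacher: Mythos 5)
Your overall strategy is the same as the paper's: cover $\str$ by the strip images $q\bigl(S_{\alpha}\setminus(X\cup Y)\bigr)$ and by tubes around the seams, pull the cover back by $\varphi$, and apply the Brown--Salleh theorem through matched base points. But there is a genuine gap in the verification on the $G$ side, caused precisely by your choice of base points $\varphi_{\beta}(\pm 1/2)$ together with tubes of vertical extent greater than $1/2$. The claim that $\varphi^{-1}(\Vcov{\beta})$ is a single open subarc of $I_{\beta}$ around $\chi_{\beta}(0)$ is false in general for such tubes. Indeed, $\varphi_{\beta}(\pm 1/2)$ sits at height $1/2$ over $x_{\beta}$ (resp.\ $y_{\beta}$), so your rectangle attached to $X_{\beta}=(a,b)\times\{\eps\}$ must reach below height $1/2$, and since the tube has to contain the whole seam in order for $\coverU\cup\coverV$ to cover $\str$, it must have full horizontal extent $(a,b)$. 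But the diagonal legs of the other edge-paths, $\varphi_{\beta'}(t)=q\bigl(2(1+t)x_{\beta'},(1+t)\eps\bigr)$ for $t\in[-1,-1/2]$, sweep horizontally across the interval $\bigl(2(1-w)x_{\beta'},\,x_{\beta'}\bigr]$ at heights just below $1/2$, and can therefore enter your rectangle over $(a,b)$ even though $x_{\beta'}\notin(a,b)$. This happens already in the paper's own example $\str=\RR^2\setminus(\ZZ\times 0)$ with $x_{\beta'}=n+1/2$: for any fixed $w>1/2$ (and even for widths chosen per seam) the diagonal leg of a far-away edge crosses the tubes over many intervals $(m,m+1)$, because the sweep has length $(2w-1)(n+1/2)\to\infty$. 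Consequently $\varphi^{-1}(\Vcov{\beta})$, and likewise $\varphi^{-1}(\Ucov{\alpha})\cap\varphi^{-1}(\Vcov{\beta})$, acquires extra path components, namely open sub-arcs in the interiors of other edges; these contain no point of $\varphi^{-1}(\cutset)$ and cannot be joined to the central arc, since the vertices map to height $0$, outside the tube. So on the $G$ side the cover elements are not path-connected and the base set does not meet all components of the pieces: both the ``simply connected pieces'' hypothesis and the representativity condition of the groupoid van Kampen theorem fail for the pulled-back cover, exactly where you assert that ``each piece on both sides is a disjoint union of contractibles whose base-point sets are in bijection under $\varphi$''.

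The repair is the paper's parameter choice, not a smaller $w$: take thin tubes such as $q\bigl((a,b)\times(0.8,1]\bigr)\cup q\bigl((c,d)\times[-1,-0.8)\bigr)$, whose height range lies strictly above $1/2$; then each tube meets $\varphi(G)$ only in the two vertical legs of its own path $\varphi_{\beta}$, the pullback is a single open arc around $\chi_{\beta}(0)$, and the two base points per seam are chosen on those vertical legs inside the tube (the paper's $d_{\beta},d'_{\beta}$) rather than at the fixed height $1/2$. With that change your verification goes through and essentially coincides with the paper's proof. Let me also point out that your announced ``main obstacle'' is not one: no local-finiteness or shrinking argument is needed, because in the model-strip coordinates the rectangles attached to distinct boundary intervals are automatically pairwise disjoint, meet no other seam and no $s_{\alpha'}$, irrespective of whether $G$ is locally finite; this is exactly why the paper works with standard neighborhoods at a fixed height. (Finally, your base set omits nothing essential but is larger than needed; the only points that matter are two per seam in the two components of the tube minus the seam, plus one point in each strip that carries no seam at all, which the paper records as the set $K'$.)
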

The proof of these theorems will be given in \S\ref{sect:proof:th:phi_Pi1_iso}.
They are based on application of van Kampen theorem for groupoids, see Lemma~\ref{lem-3}.

Notice that when $\str_0$ has only finitely many seams, the result is rather simple and in this case $\varphi(G)$ is a strong deformation retract of $\str$.

On the other hand, if the number of seams is infinite, the graph $G$ might be not a locally finite CW-complex and $\str$ can not be deformed onto the image $\varphi(G)$.

\begin{example}\rm
Let $\str = \RR^2 \setminus (\ZZ\times 0)$.
Then $\str$ has an atlas consisting of two model strips:
\begin{align*}
S_{0} &= \RR \times (-1,1) \ \bigcup \ \mathop{\cup}\limits_{n\in\ZZ} (n,n+1) \times \{1\},  \\
S_{1} &= \RR \times (-1,1) \ \bigcup \ \mathop{\cup}\limits_{n\in\ZZ} (n,n+1) \times \{-1\}.
\end{align*}
and its graph has two vertices connected with countably many edges, see Figure~\ref{fig:inf_loops}.
In this case $G$ is not locally finite at its vertices and therefore it has no countable local base, whence $\varphi: G \to \str$ is not an embedding.
\begin{figure}[htbp!]
\includegraphics[height=3cm]{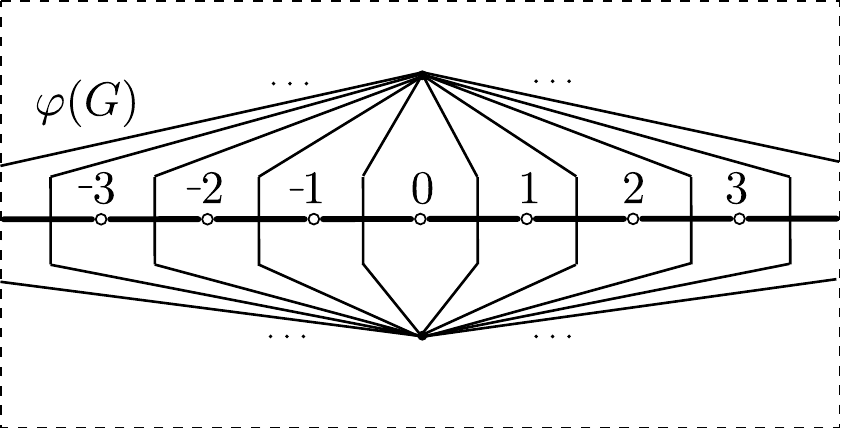}
\caption{}\label{fig:inf_loops}
\end{figure}
\end{example}

Let us also mention the following statement describing certain topological properties of striped surfaces.
\begin{lemma}\label{lm:surf_prop}
Let $\str$ be a connected non-compact surface.
Then $\str$ has the homotopy type of an aspherical CW-complex, and $\pi_1\str$ is free.
\end{lemma}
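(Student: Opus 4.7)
My plan is to prove Lemma~\ref{lm:surf_prop} in three steps: (i) exhibit a $1$-dimensional CW complex $G$ together with a homotopy equivalence $\str\simeq G$; (ii) deduce asphericity from the fact that the universal cover of a graph is a tree; (iii) deduce freeness of $\pi_1\str$ from the classical structure theorem for the fundamental group of a graph. Given (i), steps (ii) and (iii) are standard: the universal cover of a connected $1$-dimensional CW complex is simply connected and $1$-dimensional, hence a tree, hence contractible, so $\pi_n(G)=0$ for $n\ge 2$; and after collapsing a maximal spanning tree $G$ is homotopy equivalent to a wedge of circles, so $\pi_1 G$ is free.

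The substantive content is step (i). The plan is to triangulate $\str$ (every topological surface admits a triangulation) and then build a \emph{spine}: a $1$-dimensional subcomplex which is a strong deformation retract of $\str$. For this I would exhaust $\str$ by an increasing sequence of compact, connected subsurfaces $K_1\subset K_2\subset\cdots$ with $\str=\bigcup_n K_n$ and with each $K_n$ having non-empty boundary (this is possible precisely because $\str$ is non-compact: one can always enlarge a compact piece without swallowing $\str$). Every compact surface with non-empty boundary is known to collapse simplicially onto a $1$-dimensional subcomplex (its spine) $G_n\subset K_n$. After subdividing if necessary, these collapses can be chosen compatibly, so that $G_n\subset G_{n+1}$ and the deformation retraction $K_{n+1}\to G_{n+1}$ restricts on $K_n$ to the deformation retraction $K_n\to G_n$. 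Setting $G:=\bigcup_n G_n$ then produces a $1$-dimensional CW complex and the individual retractions assemble, on each compact set, to a strong deformation retraction $\str\to G$.

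I expect the main obstacle to be the coherent choice of spines across the exhaustion: one needs the homotopies of $K_{n+1}$ onto $G_{n+1}$ to be the identity on $G_n$ (which already sits in $K_n$) in order to glue them into a well-defined deformation on all of $\str$. This can be arranged by performing the collapses of $K_{n+1}$ only in the simplicial annular region $\overline{K_{n+1}\setminus K_n}$, keeping $G_n$ fixed throughout, and then concatenating homotopies on each compact piece with constant speed so that the total deformation of $\str$ remains continuous (the local finiteness of the triangulation guarantees that only finitely many homotopies are active on any compact subset, so continuity is automatic). Once this coherence is in place, everything else reduces to the black-box graph-theoretic facts cited above.
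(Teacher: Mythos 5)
Your route is genuinely different from the paper's (the paper does not build a spine at all: it quotes Lundell--Weingram for the CW homotopy type, gets asphericity from Epstein's theorem that the interior of the universal cover is $\RR^2$ together with Brown's collar theorem, and quotes Johansson/Ahlfors--Sario, or Swan's theorem via cohomological dimension $\le 1$, for freeness of $\pi_1\str$), and if completed it would prove more, namely that $\str$ is homotopy equivalent to a graph. However, there is a concrete gap exactly at the step you identified as the main obstacle. The proposed fix --- perform the collapses of $K_{n+1}$ only inside $\overline{K_{n+1}\setminus K_n}$, keeping $G_n$ fixed --- is not available for an arbitrary exhaustion. The region $\overline{K_{n+1}\setminus K_n}$ may contain a component attached to $K_n$ along its \emph{entire} boundary, e.g.\ a disk capping off a boundary circle of $K_n$: take $\str=\RR^2$, $K_1=\{1\le \|x\|\le 2\}$ (an annulus) and $K_2=\{\|x\|\le 3\}$. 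This is a legitimate exhaustion by compact connected bordered subsurfaces, yet $\pi_1 K_1\to\pi_1 K_2$ is not injective, the circle spine $G_1$ of $K_1$ is null-homotopic in $K_2$, and no collapse of $K_2$ onto a $1$-complex containing $G_1$ (rel $G_1$) can exist. So ``keeping $G_n$ fixed'' can be outright impossible, not merely delicate.

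The standard repair is to normalize the exhaustion first: enlarge each $K_n$ by adjoining the compact components of $\str\setminus K_n$, so that every component of $\str\setminus K_n$ is non-compact. Then every component $W$ of $\overline{K_{n+1}\setminus K_n}$ has non-empty ``free'' boundary in $\partial K_{n+1}\setminus K_n$ (a path inside the non-compact complementary component from $W$ to a point outside $K_{n+1}$ must first exit $K_{n+1}$ through $W$). You then also need to state and justify the \emph{relative} collapsing lemma you are actually using: a compact bordered surface each of whose components has non-empty free boundary collapses onto the union of the non-free part of its boundary with a $1$-complex; this is true but does not follow formally from the absolute spine statement and needs a short simplicial-collapse or handle/Morse argument (no $2$-handles are required when every component has free boundary). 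With these two points supplied, your inductive construction, the gluing of the homotopies (continuity on a compactly generated space being checked on the $K_m$), and steps (ii)--(iii) on graphs are fine; note only that triangulability presupposes second countability, which is implicit in the paper's setting.
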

\begin{proof}
For the fact that every separable manifold has homotopy type of a CW-comples see~\cite[Corollary~5.7.]{LundellWeingram:CW:1969}.

Further, let $p:\tilde{\str}\to\str$ be the universal covering of $\str$.
Then by~\cite[Corollary~1.8]{Epstein:AM:1966}, the interior $\Int{\tilde{\str}}$ is homeomorphic to $\RR^2$.
Moreover, the inclusion $\Int{\tilde{\str}} \subset \tilde{\str}$ is a homotopy equivalence due to existence of collars of the boundary of metrizable manifolds, \cite[Theorem~2]{Brown:AM:1962}.
Hence $p$ induces isomorphisms $0=\pi_k\Int{\tilde{\str}}\cong \pi_k \tilde{\str} \cong \pi_k \str$, $k\geq2$.
This, by definition, means aspherity of $\str$.

Finally, the statement that $\pi_1\str$ is free, is proved in distinct sources, e.g.~\cite{Johansson:1931} or~\cite[Theorem~44A]{AhlforsSario:RS:1960}. 
Another proof is that since $\str$ is non-compact, it's cohomological dimension $cd\str\leq 1$, whence by~\cite[Corollary to Theorem~A]{Swan:JA:1969}, $\pi_1\str$ is free.
See also~\cite{Putman:MO:18454} for discussions and~\cite{Putman:2}.
\end{proof}

\section{Fundamental groupoids}\label{sect:fund_groupoids}
In this section we will briefly recall the notion of fundamental groupoid, list some of its properties and formulate van Kampen theorem for groupoids.

\subsection{Small categories}
A \myemph{small category $\catC$} is given by the following data:
\begin{enumerate}[label=$\bullet$]
\item a set $\catCObj$, called \myemph{set of objects};
\item a set $\catCHom{\oX}{\oY}$ for each pair of objects $\oX,\oY\in\catCObj$, called \myemph{set of morphisms between $\oX$ and $\oY$};
\item for each pair of morphisms $f \in \catCHom{\oX}{\oY}$, $g \in \catCHom{\oY}{\oZ}$ there defined a unique morphism $g \circ f \in \catCHom{\oX}{\oZ}$, called \myemph{composition of $f$ and $g$}
\end{enumerate}
such that the following axioms are satisfied:
\begin{enumerate}[leftmargin=*, label=-]
\item
{\bf{associativity:}} for any three morphisms $f \in \catCHom{\oX}{\oY}$, $g \in \catCHom{\oY}{\oZ}$ and $h\in\catCHom{\oZ}{\oW}$ we have
\[
    h \circ (g \circ f) = (h \circ g) \circ f
\]
\item
{\bf{identity:}}
for each object $\oX\in\catCObj$ there exists a morphism $\id_A \in \catCHom{\oX}{\oX}$, called the \myemph{identity}, such that for each $f \in \catCHom{\oX}{\oY}$ we have
\[
    f \circ \id_{\oX} = \id_{\oY} \circ f = f.
\]
\end{enumerate}

\subsection{Functors}
A \myemph{functor} $F\colon \catC \to \catD$ from category $\catC$ to category $\catD$ is the following collection of maps (usually denoted by the same letter $F$):
\begin{enumerate}[label={\rm\arabic*)}]
\item
a map $F:\catCObj \to \catDObj$, associating to each object $\oX\in\catC$ an object $F(\oX) \in \catD$,

\item
a map $F:\catCHom{\oX}{\oY} \to \catDHom{F(\oX)}{F(\oY)}$ for every pair of objects $\oX,\oY\in\catC$,
\end{enumerate}
such that the following conditions are satisfied:
\begin{enumerate}[label={\rm\alph*)}]
\item
$F(\id_{\oX}) = \id_{F(\oX)}$ for each $\oX \in \catCObj$;

\item
$F(g\circ f)=F(g)\circ F(f)$ for any morphisms $f,g$ in category $\catC$.
\end{enumerate}

\subsection{Coequalizers}
Let $f,g:\oX \to \oY$ be two morphisms.
Then a \myemph{coequalizer} of $f,g$ is a morphism $h:\oY\to\oZ$ such that
\begin{enumerate}[label={\rm\arabic*)}]
\item
$h\circ f = h\circ g$;

\item
for any other morphism $h':\oY\to\oZ'$ with $h'\circ f = h'\circ g$ there exists a unique morphism $q:\oZ \to \oZ'$ such that $h' = q \circ h$.
\end{enumerate}
In other words, we have the following diagram:
\[
    \xymatrix{\oX \ar@/^/[r]^-{f} \ar@/_/[r]_-{g}& \oY \ar[r]^-{h} \ar[rd]_-{h'} & \oZ \ar[d]^-{q}\\
                                        &			    & \oZ'}
\]
which is commutative except for paths $f,g$ connecting $X$ and $Y$.

\subsection{Groupoids}
A morphism $f \in \catCHom{\oX}{\oY}$ is called an \myemph{isomorphism} whenever there exists $g \in \catCHom{\oY}{\oX}$ such that $g \circ f = \id_{\oX}$ and $f \circ g = \id_{\oY}$.

A \myemph{groupoid} is a category in which all morphisms are isomorphisms.
Moreover, a morphism of groupoids is just a functor between the corresponding categories.

\subsection{Example}
For any set $\oX$ the cartesian product $\oX\times\oX$ has a natural groupoid structure.

Evidently, if $f:\oX\to\oY$ is a bijection of sets, then the induced mapping
\[
    f\times f: \oX\times\oX\to\oY\times\oY,
    \qquad
    (f\times f)(a,b) = \bigl(f(a),f(b)\bigr)
\]
is an isomorphism of groupoids.

\subsection{Fundamental groupoid}
Let $\oX$ be a topological space and $\qX\subset \oX$ be a subset.
Let also $I = [0,1]$, and $\CIX$ be the set of continuous paths with ends at $\qX$.
Say that two paths $\alpha, \beta \in \CIX$ are equivalent ($\alpha\sim\beta$) if
\begin{enumerate}
\item $\alpha(0)=\beta(0)$, $\alpha(1)=\beta(1)$;
\item $\alpha$ is homotopic to $\beta$ relatively $\{0,1\}$.
\end{enumerate}
Then the set $\grpXXp = \CIX/\sim$ of the corresponding equivalence classes is called the \myemph{fundamental groupoid of pair $(\oX,\qX)$} and the set $\qX$ is \myemph{the set of its base points}.

The equivalence class of a path $\alpha\in\CIX$ will be denoted by $\ecl{\alpha}$.
For a class $q \in \grpXXp$ we will denote by $q(0)$ and $q(1)$ the common start and end-points of all representatives of $q$.

Then $\grpXXp$ admits a natural ``partial'' operation of composition of paths turning it into groupoid.
Evidently, if $\qX$ consists of a unique point, then the multiplication is defined for all elements of $\grpXXp$, and in this case $\grpXXp$ is the same as the fundamental group of $\oX$ at the point $\qX$.

Notice also that we have a natural map
\begin{equation}\label{equ:proj_to_ends}
    \eprjX: \grpXXp \to \qX \times \qX,
    \qquad
    \eprjX(q) = \bigl(q(0), q(1)\bigr),
\end{equation}
associating to each homotopy class of paths its start and end-points.

Moreover, every continuous map of pairs $f\colon (\oX,\qX)\to (\oY,\qY)$ induces the following commutative diagram:
\begin{equation}\label{equ:simply_conn_spaces}
\aligned
\xymatrix{
    \grpXXp \ar[r]^-{f^*} \ar[d]_-{\eprjX}& \grpYYp \ar[d]^-{\eprjY} \\
     \qX \times \qX \ar[r]^-{f \times f}  &  \qY \times \qY.}
\endaligned
\end{equation}

The following simple lemma is left for the reader:
\begin{lemma}\label{lm:Pi1_prj_XX}
\begin{enumerate}[label={\rm(\alph*)}, leftmargin=*]
\item\label{enum:Pi1_prj_XX:1} The map~\eqref{equ:proj_to_ends} is a morphism of groupoids.
\item\label{enum:Pi1_prj_XX:2} If $\oX$ is path connected and simply connected, then $\eprjX$ is an isomorphism.
\item\label{enum:Pi1_prj_XX:3} Suppose $f:(\oX,\qX)\to(\oY,\qY)$ is a continuous map, $\oX$ and $\oY$ are path connected and simply connected, and the restriction map $f|_{\qX}\colon \qX \to \qY$ is a bijection.
Then all morphisms in~\eqref{equ:simply_conn_spaces} are isomorphisms.
\end{enumerate}
\end{lemma}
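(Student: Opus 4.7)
The plan is to treat each of parts \ref{enum:Pi1_prj_XX:1}--\ref{enum:Pi1_prj_XX:3} in order, using the standard groupoid structure on $\qX\times\qX$: its objects are points of $\qX$, its morphisms are pairs $(a,b)$ with source $a$ and target $b$, composition is $(b,c)\circ(a,b)=(a,c)$, and the identity at $x$ is $(x,x)$.

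For \ref{enum:Pi1_prj_XX:1}, I would simply check that $\eprjX$ preserves composition and identities. Given composable classes $\ecl{\alpha}:a\to b$ and $\ecl{\beta}:b\to c$ in $\grpXXp$, their concatenation $\ecl{\beta\ast\alpha}$ has endpoints $(a,c)$, which matches $(b,c)\circ(a,b)$ in $\qX\times\qX$; the constant loop at a point $x\in\qX$ maps to $(x,x)=\id_{x}$. Together these say $\eprjX$ is a functor, i.e.\ a morphism of groupoids.

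For \ref{enum:Pi1_prj_XX:2}, surjectivity of $\eprjX$ follows from path-connectedness of $\oX$: for any $(a,b)\in\qX\times\qX$ there is a path in $\oX$ from $a$ to $b$, whose class projects to $(a,b)$. Injectivity is exactly the statement that in a simply connected space any two paths with coinciding endpoints are homotopic rel $\{0,1\}$; equivalently, the loop obtained by concatenating one with the reverse of the other is null-homotopic, which is immediate from $\pi_{1}\oX=0$.

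For \ref{enum:Pi1_prj_XX:3}, I would assemble the previous parts via the commutativity of~\eqref{equ:simply_conn_spaces}: by \ref{enum:Pi1_prj_XX:2} the two vertical arrows $\eprjX$ and $\eprjY$ are isomorphisms of groupoids, while the bijectivity of $f|_{\qX}:\qX\to\qY$ makes $f\times f$ a bijection of sets and therefore an isomorphism of the product groupoids. Commutativity then yields $f^{*}=\eprjY^{-1}\circ(f\times f)\circ\eprjX$, a composition of isomorphisms. None of the steps conceals a real obstacle; the only thing to be careful about is maintaining consistent source/target conventions for concatenation of paths (I will use the convention $\beta\ast\alpha$ with $\alpha$ traversed first, matching the composition rule above).
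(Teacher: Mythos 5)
Your proposal is correct: the paper gives no proof of this lemma (it is explicitly left to the reader), and your argument is exactly the standard one the authors intend --- functoriality of $\eprjX$ by checking concatenation and constant loops, bijectivity of $\eprjX$ from path-connectedness (surjectivity) and simple connectedness (injectivity, via the standard fact that paths with common endpoints are homotopic rel $\{0,1\}$), and then $f^{*}=\eprjY^{-1}\circ(f\times f)\circ\eprjX$ using the commutativity of~\eqref{equ:simply_conn_spaces} and the fact, noted earlier in the paper, that a bijection $\qX\to\qY$ induces an isomorphism $\qX\times\qX\to\qY\times\qY$ of groupoids. No gaps.
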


\subsection{Coproducts}
Let $\{\oX_{\lambda}\}_{\lambda\in\Lambda}$ be a collection of objects of a category $\catC$.
Then their \myemph{coproduct} is an object $\oY = \UnionLambda \oX_{\lambda}$ together with a collection of morphisms $i_{\lambda}: \oX_{\lambda} \to \oY$ such that for any other collection of morphisms $\{f_{\lambda}: \oX_{\lambda} \to \oZ\}_{\lambda\in\Lambda}$, to the same object $\oZ$ there exists a unique morphism $f\colon \oY \to \oZ$ such that $f_{\lambda} = f \circ i_{\lambda}$.
In other words, for each $\lambda\in\Lambda$ the following diagram is commutative:
\[
\xymatrix{
    \oX_{\lambda}  \ar[rr]^-{i_{\lambda}} \ar[rrd]_-{f_{\lambda}} && \UnionLambda \oX_{\lambda} \ar[d]^-{f=\UnionLambda f_{\lambda}} \\
                                    && \oZ
}
\]

Consider examples.
\begin{enumerate}[wide, label={\rm\alph*)}]
\item
In the category of sets a coproduct is just a disjoint union of sets.

\item
Let $\{ G_{\lambda}\}_{\lambda\in\Lambda}$ be a collection of groupoids with multiplications
\[ \mu_{\lambda}: G_{\lambda} \times G_{\lambda} \supset Q_{\lambda} \to G_{\lambda},\]
and sets of objects $E_{\lambda}$.
Then their disjoint union
\[ G = \UnionLambda  G_{\lambda}\]
has a natural groupoid structure in which the partial multiplication
\[
    \mu: G \times G \supset  \UnionLambda  Q_{\lambda} \to G,
\]
is defined by $\mu(a,b) = \mu_{\lambda}(a,b)$, when $(a,b)\in Q_{\lambda}$.
This groupoid is the coproduct in the category of groupoids.

Notice also that if $\{f_{\lambda}: G_{\lambda} \to H\}_{\lambda\in\Lambda}$ is a collection of morphisms of groupoids, then the induced morphism $f=\UnionLambda f_{\lambda}:G \to H$ is given by $f(a) = f_{\lambda}(a)$ if $a\in G_{\lambda}$.

\item\label{groupoid_iso}
Let $\{ (\oX_{\lambda}, \qX_{\lambda})\}_{\lambda\in\Lambda}$ be a collection of pairs of topological spaces, $\oX =  \UnionLambda  \oX_{\lambda}$ and $\qX = \UnionLambda  \qX_{\lambda}$ be the corresponding coproducts of sets.
For each $\lambda\in\Lambda$ the natural inclusion of pairs $\incl{\lambda}: (\oX_{\lambda}, \qX_{\lambda}) \subset (\oX,\qX)$ induces the corresponding morphism of groupoids
\[
    \inclGrp{\lambda}: \grpXlXpl \to \grpXXp.
\]
Hence by coproduct property, we have a unique morphism $\zeta$ making the following diagram commutative:
\begin{equation}\label{equ:coproduct_iso}
\aligned
\xymatrix{
    \grpXlXpl \ar@{^(->}[rr]^-{i_{\lambda}}  \ar[rrd]_-{\inclGrp{\lambda}} && \UnionLambda \grpXlXpl \ar[d]^{\zeta = \UnionLambda \inclGrp{\lambda}}  \\
    && \grpXXp
}
\endaligned
\end{equation}
where $i_{\lambda}$ is a natural morphism into coproduct.
We claim that $\zeta$ is an isomorphism, i.e. it is bijective.

Let us show that $\zeta$ is \myemph{surjective}.
Indeed, let $\alpha:(I,\partial I) \to (\oX,\qX)$ be a continuous map.
We should show that $\ecl{\alpha} = \zeta(q)$ for some $q \in \UnionLambda \grpXlXpl$.
Since $I$ is path-connected and $\oX_{\lambda} \cap \oX_{\nu} = \varnothing$ for $\lambda\not=\nu$ it follows that $\alpha(I)$ is contained in some $\oX_{\lambda}$.
Hence $\alpha = \incl{\lambda}\circ \beta$ for some path $\beta:(I,\partial I) \to (\oX_{\lambda},\qX_{\lambda})$.
Therefore, $\ecl{\alpha} = \inclGrp{\lambda}\bigl(\ecl{\beta}\bigr) = \zeta\circ i_{\lambda}\bigl( \ecl{\beta}\bigr) = \zeta\bigl( \ecl{ i_{\lambda}\circ\beta } \bigr)$.

Let us prove that $\zeta$ is \myemph{injective}.
Let $p,q \in \UnionLambda \grpXlXpl$ be two classes such that $\zeta(p)=\zeta(q)$.
By definition, $p = i_{\lambda}([\alpha])$ and $q = i_{\nu}([\beta])$ for some paths
\[
    \alpha:(I,\partial I) \to (\oX_{\lambda},\qX_{\lambda}), \qquad
    \beta:(I,\partial I) \to (\oX_{\nu},\qX_{\nu}).
\]
Then
\[
    \inclGrp{\lambda}\bigl(\ecl{\alpha}\bigr) =   \zeta\bigl( i_{\lambda}([\alpha]) \bigr) = \zeta(p) = \zeta(q) = \zeta\bigl( i_{\nu}([\beta]) \bigr)
    = \inclGrp{\nu}\bigl(\ecl{\beta}\bigr)
\]
which means that the paths $\incl{\lambda}\circ\alpha, \incl{\lambda}\circ\beta: (I,\partial I)\to (\oX, \qX)$ are homotopic relatively their ends.
Hence $\lambda = \nu$.
Moreover, let $H:[0,1]\times[0,1]\to\oX$ be the corresponding homotopy between $H_0 = \alpha$ and $H_1=\beta$.
Since $[0,1]\times[0,1]$ is path-connected, it follows that $H([0,1]\times[0,1]) \subset \oX_{\lambda}$, and thus $H$ is a homotopy between $\alpha$ and $\beta$ in $\oX_{\lambda}$ relatively their ends.
This means that $\lambda=\nu$, $\ecl{\alpha}=\ecl{\beta} \in \grpXlXpl$, whence $p = i_{\lambda}(\ecl{\alpha}) = i_{\nu}(\ecl{\beta}) = q$.
Thus $\zeta$ is injective.
\end{enumerate}

\subsection{van Kampen theorem for groupoids}
Let $\Lambda$ be a set.
It will be convenient to consider a category $\catC$ whose objects are triples $\bigl( \oX, \qX, \coverU \bigr)$ where
\begin{enumerate}
    \item $\oX$ is a topological space;
    \item $\qX \subset\oX$ a subset;
    \item $\coverU =\{\Ucov{\lambda} \}_{\lambda \in \Lambda}$ is an open cover of $\oX$ enumerated by the same set of indices $\Lambda$;
\end{enumerate}
and morphisms between triples $\bigl( \oX, \qX, \coverU =\{\Ucov{\lambda} \}_{\lambda \in \Lambda}\bigr)$ and $\bigl(\oY,\qY,\coverV=\{\Vcov{\lambda}\}_{\lambda \in \Lambda}\bigr) \in \catC$ are continuous maps of pairs $f\colon (\oX,\qX) \to (\oY,\qY)$ such that $f\bigl( \Vcov{\lambda} \bigr) \subset \Ucov{\lambda}$ for all $\lambda\in\Lambda$.

Given a triple $\bigl( \oX, \qX, \coverU \bigr) \in \catC$,
for each $n$-tuple $\nu = (\nu_1,\ldots,\nu_n) \in \Lambda^n$ put
\begin{align*}
    \Ucov{\nu}   &:= \Ucov{\nu_1}\cap\cdots\cap U^{\nu_n}, &
    \zUcov{\nu} &:= \Ucov{\nu}\cap \qX.
\end{align*}
In particular, for $n=2$ and $\nu = (j,k) \in A^2$ we have the following two morphisms of fundamental groupoids:
\begin{align*}
    &a_{jk}:\grp{\Ucov{j}\cap \Ucov{k}}{\zUcov{(j,k)}} \to \grpUcov{j}, &
    &b_{jk}:\grp{\Ucov{j}\cap \Ucov{k}}{\zUcov{(j,k)}} \to \grpUcov{k}
\end{align*}
induced by natural inclusions of $\Ucov{j}\cap U^k$ into $U^j$ and $U^k$ respectively.

These morphisms yield morphisms of the corresponding coproducts:
\[
    a,b: \UnionJK  \grpUcov{(j,k)} \to \bigsqcup\limits_{\lambda \in \Lambda} \grpUcov{\lambda}
\]

Similarly, the inclusion $(\Ucov{j},\zUcov{j}) \subset (\oX, \qX)$ induces a morphism
\[ c_j: \grpUcov{j} \to \grpXXp. \]

Then the \myemph{$\varPi_1$-diagram of the cover $\coverU $} is the following diagram:
\begin{equation}\label{equ:diag21}
\xymatrix{
    \UnionJK  \grpUcov{(j,k)}
    \ar@/^/[r]^-a \ar@/_/[r]_-b&
    \bigsqcup\limits_{\lambda \in \Lambda}
    \grpUcov{\lambda} \ar[r]^-c & \grpXXp.
}
\end{equation}

\begin{theorem}[van Kampen theorem for fundamental groupoids, \cite{BrownSalleh:AM:1984}]\label{th:vanKampenTh}
Suppose that an open cover $\coverU =\{\Ucov{\lambda}\}_{\lambda \in \Lambda}$ of $\oX$ has the following property:
\begin{enumerate}[label={$(*)$}]
\item\label{enum:vanKampenTh}
 a subset $\qX \subset \oX$ meets each path-component of each non-empty two-fold and three-fold intersection of distinct sets of $\coverU$.
\end{enumerate}
Then in the $\varPi_1$-diagram~\eqref{equ:diag21} of the cover $\coverU $ the morphism $c$ is a coequalizer for morphisms $a,b$ in the  category of fundamental groupoids.
\end{theorem}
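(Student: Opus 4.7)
The plan is to verify the universal property of a coequalizer in the category of groupoids. The equation $c\circ a=c\circ b$ is immediate from the definitions: both morphisms send the class of a path inside $U_j\cap U_k$ with endpoints in $U'_{(j,k)}$ to its class as a path in $X$ with endpoints in $X'$, via the same inclusion $U_j\cap U_k\subset X$.

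For the universal property, fix a groupoid $H$ and a morphism $d\colon \bigsqcup_{\lambda\in\Lambda}\varPi_1(U_\lambda,U'_\lambda)\to H$ with $d\circ a=d\circ b$; I construct the unique $q\colon \varPi_1(X,X')\to H$ with $q\circ c=d$. On an object $x\in X'$, pick any $\lambda$ with $x\in U_\lambda$ and set $q(x):=d(x)$, viewing $x$ as an object of $\varPi_1(U_\lambda,U'_\lambda)$; if $x\in U_j\cap U_k$, then $d\circ a=d\circ b$ applied to the constant class at $x\in U'_{(j,k)}$ shows the two choices agree. On a morphism $[\alpha]\in\varPi_1(X,X')$ represented by $\alpha\colon(I,\partial I)\to(X,X')$, the Lebesgue number lemma provides a subdivision $0=t_0<t_1<\cdots<t_n=1$ with $\alpha([t_{i-1},t_i])\subset U_{\lambda_i}$ for some index $\lambda_i$. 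The intermediate points $\alpha(t_i)$ need not lie in $X'$, but the two-fold part of hypothesis $(*)$ produces, for each $1\le i\le n-1$, a point $x_i\in X'$ in the same path-component of $U_{\lambda_i}\cap U_{\lambda_{i+1}}$ as $\alpha(t_i)$, together with a joining path $\beta_i$ in that intersection. Splicing in the $\beta_i$'s decomposes $[\alpha]=[\tilde\alpha_n]\cdots[\tilde\alpha_1]$ with $[\tilde\alpha_i]\in\varPi_1(U_{\lambda_i},U'_{\lambda_i})$, and one declares
\[
    q([\alpha]):=d([\tilde\alpha_n])\circ\cdots\circ d([\tilde\alpha_1]).
\]

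The main obstacle is well-definedness of this formula under four kinds of choices: the subdivision of $I$, the indices $\lambda_i$ when $\alpha|_{[t_{i-1},t_i]}$ lies in several $U_\lambda$'s, the connecting paths $\beta_i$, and the homotopy representative $\alpha$. Independence under refinement is functoriality of $d$ within a single $\varPi_1(U_{\lambda_i},U'_{\lambda_i})$; independence under the choice of $\lambda_i$ is $d\circ a=d\circ b$ applied on the relevant two-fold intersection; independence under $\beta_i$ follows because two choices differ by a loop contractible in $U_{\lambda_i}\cap U_{\lambda_{i+1}}$. The delicate case is independence under a homotopy $\Phi\colon I\times I\to X$ rel $\partial I$: Lebesgue's lemma applied to $I\times I$ yields a rectangular grid whose cells are each mapped into some single $U_\lambda$; passing across a shared edge between adjacent cells takes place in a two-fold intersection, while the cells meeting at an interior vertex sit inside a triple intersection whose path-components contain base points of $X'$ by the three-fold part of hypothesis $(*)$. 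This allows the decomposition at one horizontal slice of $\Phi$ to be rewritten into the decomposition at the next slice via a sequence of elementary moves, each preserving the image in $H$ thanks to $d\circ a=d\circ b$.

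It remains to observe that $q$ is a groupoid morphism (by construction as a composition of $d$-images), that $q\circ c=d$ (which holds on paths contained in a single $U_\lambda$ by definition and extends to arbitrary paths by the product formula), and that $q$ is unique: every class in $\varPi_1(X,X')$ is, by the subdivision argument, a product of images under $c$, so its $q$-value is forced by the equation $q\circ c=d$ once the values on the generators are prescribed.
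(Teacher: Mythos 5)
First, note that the paper does not prove this theorem at all: it is quoted verbatim from Brown--Salleh \cite{BrownSalleh:AM:1984} and used as a black box (only Lemma~\ref{lem-3} and the verification in \S\ref{sect:proof:th:phi_Pi1_iso} are proved here). So your attempt can only be measured against the standard published argument, and your overall strategy is indeed that one: verify the universal property directly, defining $q$ on a class $[\alpha]$ by Lebesgue subdivision, pushing the intermediate points into $\qX$ via condition \ref{enum:vanKampenTh}, and checking well-definedness with a grid argument on $I\times I$.

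Within that strategy, however, two of your justifications are genuinely wrong or incomplete. (i) Independence of the connecting paths $\beta_i$ does \emph{not} follow because ``two choices differ by a loop contractible in $U_{\lambda_i}\cap U_{\lambda_{i+1}}$'': the two-fold intersections are not assumed simply connected, so the discrepancy path $\beta_i'\cdot\beta_i^{-1}$ (a path between two base points of $\qX$ inside the intersection) need not be null-homotopic there. The correct argument is the one you use for the choice of $\lambda_i$: this discrepancy is an element of $\grp{U_{\lambda_i}\cap U_{\lambda_{i+1}}}{\zUcov{(\lambda_i,\lambda_{i+1})}}$, and replacing $\beta_i$ by $\beta_i'$ multiplies $[\tilde\alpha_i]$ and $[\tilde\alpha_{i+1}]$ by its images under $a$ and $b$ respectively, so $d\circ a=d\circ b$ gives invariance of the product in $H$. (ii) In the homotopy step you assert that ``the cells meeting at an interior vertex sit inside a triple intersection''; at a vertex of a rectangular grid \emph{four} cells meet, so as written your elementary moves would require the four-fold version of \ref{enum:vanKampenTh}, which is not hypothesized. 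The standard repair is to perturb the grid lines so that each vertex lies in at most three rectangles (or to process squares in a staircase order), after which only two- and three-fold intersections are needed. Finally, the entire ``sequence of elementary moves'' rewriting one horizontal slice into the next — which is exactly where the three-fold hypothesis does its work and where the bulk of Brown--Salleh's proof lies — is only gestured at; a complete proof must specify these moves and check each against $d\circ a=d\circ b$, including the behaviour at the perturbed vertices. The construction of $q$ on objects, the equation $q\circ c=d$, and the uniqueness argument are fine.
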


\subsection{$\varPi_1$-diagram for covers by simply connected sets}
Let
\[
    f: \bigl( \oX, \qX, \{\Ucov{\lambda} \}_{\lambda \in \Lambda}\bigr) \to  \bigl(\oY,\qY,\{\Vcov{\lambda}\}_{\lambda \in \Lambda}\bigr)
\]
be a morphism in the category $\catC$.
Then it is evident that $f$ induces the following diagram:
\begin{equation}\label{equ:diag22}
\aligned
\xymatrix{
    \UnionJK  (\Ucov{(j,k)},\zUcov{(j,k)})
        \ar@/^/[r]^-{a} \ar@/_/[r]_-{b} \ar[d]^-{\indxx{f}} &
    \UnionLambda (\Ucov{\lambda},\zUcov{\lambda})
    \ar[r]^-{c}  \ar@<1ex>[d]^-{\indx{f}}
    &
    (\oX,\qX) \ar@<1ex>[d]^-f \\
    \UnionJK  (\Vcov{(j,k)},\zVcov{(j,k)})
        \ar@/^/[r]^-{a'} \ar@/_/[r]_-{b'}  &
    \UnionLambda(\Vcov{\lambda},\zVcov{\lambda}) \ar[r]^-{c'}
    &
    (\oY,\qY)
}
\endaligned
\end{equation}
where $\indx{f} = \UnionLambda f|_{\Vcov{\lambda}}$ and $\indxx{f} = \UnionJK f|_{\Vcov{(j,k)}}$.
This diagram is commutative except for the left square in which we have only the following identities:
\begin{align}\label{equ:comm_left_square}
    f_1\circ a  &= a' \circ f_2, &
    f_1 \circ b &= b'\circ f_2.
\end{align}

Since $\varPi_1$ is a functor from the category of sets to the category of groupoids, \eqref{equ:diag22} implies the following diagram combined from two $\varPi_1$-diagrams:
\begin{equation}\label{equ:diag23}
\aligned
\xymatrix{
    \UnionJK  \grpUcov{(j,k)}
        \ar@/^/[r]^-a \ar@/_/[r]_-b  \ar@<1ex>[d]^-{\indxx{f}}&
    \UnionLambda \grpUcov{\lambda}
        \ar[r]^-c  \ar@<1ex>[d]^-{\indx{f}} &
    \grpXXp  \ar@<1ex>[d]^-{f} \\
    \UnionJK  \grpVcov{(j,k)}
        \ar@/^/[r]^-{a'} \ar@/_/[r]_-{b'} &
    \UnionLambda \grpVcov{\lambda}
        \ar[r]^-{c'}
    &  \grpYYp
}
\endaligned
\end{equation}
Here we used the isomorphism of groupoids~\eqref{equ:coproduct_iso}.
Notice that diagram~\eqref{equ:diag23} is also commutative except for left square in which only the identities~\eqref{equ:comm_left_square} hold.

The following lemma plays a key role in proving the basic theorem.
\begin{lemma}\label{lem-3}
Let $f: \bigl( \oX, \qX, \{\Ucov{\lambda} \}_{\lambda \in \Lambda}\bigr) \to  \bigl(\oY,\qY,\{\Vcov{\lambda}\}_{\lambda \in \Lambda}\bigr)$ be a morphism in the category $\catC$.
Suppose that
\begin{enumerate}[label={\rm(\arabic*)}, leftmargin=*, itemsep=1ex]
\item\label{enum:cov_lemma:1}
for each $\lambda \in \Lambda$ spaces $\Ucov{\lambda}$ and $\Vcov{\lambda}$ are simply connected;

\item\label{enum:cov_lemma:2}
for each $\lambda \in \Lambda$ the restriction
$f|_{\zUcov{\lambda}}\colon \zUcov{\lambda} \equiv \Ucov{\lambda}\cap\qX \, \to \, \zVcov{\lambda} \equiv \Vcov{\lambda}\cap\qY$ is a bijection;

\item\label{enum:cov_lemma:3}
the open covers $\{\Ucov{\lambda}\}_{\lambda \in \Lambda}$ and $\{\Vcov{\lambda}\}_{\lambda \in \Lambda}$ satisfy condition~\ref{enum:vanKampenTh} of van Kampen Theorem~\ref{th:vanKampenTh}, that is $\qX$ (resp.~$\qY$) meets each path component of each non-empty two-fold and three-fold intersections of elements of $\coverU$ (resp.~$\coverV$).
\end{enumerate}
Then the induced morphism $f^{*}: \grpXXp \to  \grpYYp$ of fundamental groupoids from diagram~\eqref{equ:diag23} is an isomorphism.
\end{lemma}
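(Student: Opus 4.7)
My plan is to identify $f^*$ as the morphism induced between two coequalizers arising from diagram~\eqref{equ:diag23}, and to build its two-sided inverse via the universal property of the coequalizer.

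First, I would apply van Kampen's Theorem~\ref{th:vanKampenTh} to both rows of~\eqref{equ:diag23}. Hypothesis~\ref{enum:cov_lemma:3} guarantees that condition~\ref{enum:vanKampenTh} holds for both covers $\coverU$ and $\coverV$, so $c$ is a coequalizer of $(a,b)$ and $c'$ is a coequalizer of $(a',b')$ in the category of groupoids. The morphism $f^*$ itself then emerges from the universal property of $c$ applied to $c' \circ f_1$, which coequalizes $(a,b)$ thanks to the identities~\eqref{equ:comm_left_square}. Moreover, for each $\lambda$ the restriction $f|_{\Ucov{\lambda}}\colon(\Ucov{\lambda},\zUcov{\lambda})\to(\Vcov{\lambda},\zVcov{\lambda})$ meets the hypotheses of Lemma~\ref{lm:Pi1_prj_XX}\ref{enum:Pi1_prj_XX:3} (by~\ref{enum:cov_lemma:1} and~\ref{enum:cov_lemma:2}), so each $f^*_\lambda\colon\grpUcov{\lambda}\to\grpVcov{\lambda}$ is an isomorphism, and consequently $f_1=\UnionLambda f^*_\lambda$ is an isomorphism of groupoids.

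To construct an inverse of $f^*$, I would verify that the composition $c \circ f_1^{-1}\colon \UnionLambda \grpVcov{\lambda} \to \grpXXp$ coequalizes $(a',b')$. Granted this, the universal property of $c'$ yields a unique morphism $g\colon \grpYYp \to \grpXXp$ with $g\circ c' = c\circ f_1^{-1}$; the computations $g\circ f^*\circ c = g\circ c'\circ f_1 = c\circ f_1^{-1}\circ f_1 = c$ and $f^*\circ g\circ c' = f^*\circ c\circ f_1^{-1} = c'\circ f_1\circ f_1^{-1} = c'$, combined with the uniqueness parts of the two coequalizer diagrams, identify $g$ as a two-sided inverse of $f^*$.

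The main obstacle is the coequalization check for $c \circ f_1^{-1}$. The identities~\eqref{equ:comm_left_square} yield only $c\circ f_1^{-1}\circ a'\circ f_2 = c\circ a = c\circ b = c\circ f_1^{-1}\circ b'\circ f_2$, i.e., the desired equation after precomposition by $f_2$, which need not be surjective on morphisms. To extend this to arbitrary inputs, I would pass to the pair-groupoid picture provided by Lemma~\ref{lm:Pi1_prj_XX}\ref{enum:Pi1_prj_XX:2}: since each $\Ucov{\lambda}$ and $\Vcov{\lambda}$ is simply connected, a morphism in $\grp{\Vcov{j}\cap\Vcov{k}}{\zVcov{(j,k)}}$ represented by a path between $w,z\in\zVcov{(j,k)}$ lying in a single path-component of $\Vcov{j}\cap\Vcov{k}$ reduces, after $a'$ or $b'$, to the pair $(w,z)$ considered in $\Vcov{j}$ or in $\Vcov{k}$. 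Lifting this pair via the bijections $f|_{\zUcov{j}}^{-1}$ and $f|_{\zUcov{k}}^{-1}$ to pairs in $\zUcov{j}$ and $\zUcov{k}$ sharing the same underlying points of $\qX$, and then invoking the path-component condition~\ref{enum:cov_lemma:3} for $\coverU$ to produce in $\Ucov{j}\cap\Ucov{k}$ a path joining the lifted endpoints, one concludes that $c$ sends the two morphisms to the same class in $\grpXXp$. This combinatorial identification --- showing that the relations defining the two coequalizers correspond bijectively across $f_1$ --- is the technical heart of the argument, and everything else amounts to routine manipulation of coequalizer universal properties.
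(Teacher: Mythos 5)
Your strategy coincides with the paper's own proof of Lemma~\ref{lem-3}: apply Theorem~\ref{th:vanKampenTh} to both rows of diagram~\eqref{equ:diag23}, deduce from conditions~\ref{enum:cov_lemma:1}--\ref{enum:cov_lemma:2} and Lemma~\ref{lm:Pi1_prj_XX}\ref{enum:Pi1_prj_XX:3} that the middle vertical arrow $f_1$ is an isomorphism, and then obtain an inverse $g$ of $f^{*}$ from the universal property of the coequalizer $c'$ applied to $k=c\circ f_1^{-1}$, finishing with the uniqueness clauses of the two coequalizers. Your final uniqueness computation is in fact spelled out more carefully than in the paper, which only remarks that $f^{*}$ and $g$ ``must be inverse to each other''.

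The problem lies exactly in the step you single out as the technical heart, the verification $k\circ a'=k\circ b'$ (which the paper asserts without proof). Your sketch rests on two facts that do \emph{not} follow from~\ref{enum:cov_lemma:1}--\ref{enum:cov_lemma:3}: first, that for a base point $w\in\zVcov{(j,k)}$ the two lifts $(f|_{\zUcov{j}})^{-1}(w)$ and $(f|_{\zUcov{k}})^{-1}(w)$ are the same point of $\qX$ --- condition~\ref{enum:cov_lemma:2} is a bijection statement for each $\lambda$ separately and does not force $f|_{\qX}$ to be injective; second, that the lifted endpoints can be joined by a path inside $\Ucov{j}\cap\Ucov{k}$ --- condition~\ref{enum:cov_lemma:3} only says that every path component of $\Ucov{j}\cap\Ucov{k}$ meets $\qX$, not that your two specific lifts lie in one and the same component. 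This is not a removable technicality: take $\oX=S^1$ covered by two open arcs whose intersection has two components, with one point of $\qX$ in each, and map $\oX$ onto an interval $\oY$ covered by two subintervals whose (connected) intersection contains both points of $\qY$, matching base points; then \ref{enum:cov_lemma:1}--\ref{enum:cov_lemma:3} hold, yet $k\circ a'\neq k\circ b'$ (the two lifts of a class from one point of $\qY$ to the other differ in $\grpXXp$ by the generator of $\pi_1(S^1)$), and indeed no isomorphism $\grpXXp\to\grpYYp$ can exist since the vertex groups differ. So the equality genuinely needs extra compatibility of $f$ with the covers, which is what is actually available for the covers constructed in \S\ref{sect:proof:th:phi_Pi1_iso}: $\varphi$ is injective, and every path component of every two-fold intersection contains exactly one chosen base point, with the components of $\lbStr{\alpha}\cap\lbs{\beta}$ corresponding under $\varphi$ to those of $\nbStr{\alpha}\cap\nbs{\beta}$. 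To complete your argument you must either add such a hypothesis to the lemma (e.g.\ that for each pair $(j,k)$ the map $f$ induces a base-point-compatible bijection between the path components of $\Ucov{j}\cap\Ucov{k}$ and of $\Vcov{j}\cap\Vcov{k}$) or carry out the check $k\circ a'=k\circ b'$ directly for the specific covers used in the application.
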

\begin{proof}
Conditions~\ref{enum:cov_lemma:1} and~\ref{enum:cov_lemma:2} mean that for each $\lambda\in\Lambda$ the restriction map
\[ f|_{\Ucov{\lambda}}:(\Ucov{\lambda},\zUcov{\lambda}) \to (\Vcov{\lambda},\zVcov{\lambda}) \]
satisfies condition~\ref{enum:Pi1_prj_XX:3} of Lemma~\ref{lm:Pi1_prj_XX}.
Therefore the induced morphism of groupoids $f_{\lambda}:\grpUcov{\lambda} \to \grpVcov{\lambda}$ is an isomorphism.
Hence so is the middle vertical morphism $\indx{f}$ of diagram~\eqref{equ:diag23}.

Let $\indx{g}:\UnionLambda \grpVcov{\lambda} \to \UnionLambda \grpUcov{\lambda}$ be the isomorphism inverse to $\indx{f}$.
Denote $k = c \circ \indx{g}$, see~\eqref{equ:diag23_inv}:
\begin{equation}\label{equ:diag23_inv}
\aligned
    \xymatrix{
        \UnionJK  \grpUcov{(j,k)}
            \ar@/^/[r]^-a \ar@/_/[r]_-b  \ar@<1ex>[d]^-{\indxx{f}}&
        \UnionLambda \grpUcov{\lambda}
            \ar[r]^-c  \ar@<1ex>[d]^-{\indx{f}} &
        \grpXXp  \ar@<1ex>[d]^-{f} \\
        \UnionJK  \grpVcov{(j,k)}
            \ar@/^/[r]^-{a'} \ar@/_/[r]_-{b'} &
        \UnionLambda \grpVcov{\lambda}
            \ar[r]^-{c'} \ar@<1ex>[u]^-{\indx{g}}   \ar[ur]^-{k}
        &  \grpYYp  \ar@<1ex>[u]^-{g}
    }
\endaligned
\end{equation}
Then $k\circ a' = k \circ b'$, whence by van Kampen Theorem~\ref{th:vanKampenTh} there exists a \myemph{unique} morphism $g:\grpYYp \to \grpXXp$ such that $k = g \circ c'$.
Since $\indx{f}$ and $\indx{g}$ are inverse each to other, it follows that $f$ and $g$ must also be inverse each to other and therefore $f$ is an isomorphism.
\end{proof}

\section{Proof of Theorem~\ref{th:phi_Pi1_iso_groupoids}}\label{sect:proof:th:phi_Pi1_iso}
Let $(\str, q)$ be a striped surface, $G$ its graph, and $\varphi:G \to \str$ the continuous injective map defined in \S\ref{sect:inject_G_Z}.
We should construct a subset $\cutset \subset \str$ such that the map $\varphi: G\hookrightarrow \str$ induces an isomorphism of the corresponding fundamental groupoids $\varphi_{*}:\Pi_1(G,\varphi^{-1}(\cutset))\to \Pi_{1}(\str, \cutset)$.

In fact we will also define a special open cover $\coverV$ of $\str$, and then consider a cover $\coverU = \varphi^{-1}(\coverV)$ of $G$ consisting of inverse of elements of $\coverV$.
Then $\varphi$ will evidently induce a morphism of triples $\varphi:(G,\cutGset, \coverU) \to (\str,\cutset, \coverV)$ in the category $\catC$, and we will show that conditions~\ref{enum:cov_lemma:1}-\ref{enum:cov_lemma:3} of Lemma~\ref{lem-3} are satisfied.
This will imply that $\varphi_{*}$ is an isomorphism.

\subsection*{An open cover $\coverV$ of $\str$}
Let $S$ be a model strip and $X = (a,b) \times \eps  \subset \partial S$, $\eps\in\{\pm1\}$, be a boundary interval.
Then the following \myemph{open} neighborhood of $X$ in $S$:
\[
\nb{X} =
\begin{cases}
    (a,b)\times(0.8, 1], & \eps = 1, \\
    (a,b)\times[-1, 0.8), & \eps = -1.
\end{cases}
\]
will be called the \myemph{standard} neighborhood of $X$.

It is evident, that standard neighborhoods of boundary intervals are mutually disjoint.
Hence the family of boundary intervals of $S$ is \myemph{discrete}
\footnote{Recall that a collections of subsets $\{Q_i\}_{i\in\Lambda}$ of a topological space $X$ is called \myemph{discrete}, if for each $i\in\Lambda$ there exists an open neighborhood $U_i$ of $Q_i$ such that $U_i\cap U_j=\varnothing$ for $i\not=j\in\Lambda$.}
and in particular locally finite.
Therefore, the union of any number of boundary intervals is a closed set.

By a \myemph{standard} neighborhood $\nbs{\beta}$ of a seam $\seam{\beta}$ we will mean the union of images of standard neighborhoods of $X_{\beta}$ and $Y_{\beta}$ (see Figure~\ref{fig:path_h}):
\[
\nbs{\beta} = q(\nb{X_{\beta}}) \cup q(\nb{Y_{\beta}}).
\]
Then $\nbs{\beta}$ is open in $\str$.
Moreover, $\nbs{\beta} \cap \nbs{\beta'} = \varnothing$ for $\beta\not=\beta'$, whence the family of seams is discrete and therefore locally finite.
In particular, the union of any collections of seams is also closed.

Furthermore, for $\alpha\in A$ put
\[
    \nbStr{\alpha} := q\bigl(S_{\alpha} \setminus (X\cup Y)\bigr).
\]
Thus $\nbStr{\alpha}$ is image of a model strip $S_{\alpha}$ without boundary intervals corresponding to seams.
It follows that $\nbStr{\alpha}$ is open in $\str$.
Hence we get the following open cover of $\str$.
\[
    \coverV := \{ \nbStr{\alpha} \}_{\alpha\in A} \ \cup \ \{ \nbs{\beta}\}_{\beta\in B}.
\]

\subsection*{The set $\cutset$}
Notice that $\nbs{\beta} \setminus \seam{\beta}$ consists of two connected components each homeomorphic to an open rectangle and the image $\varphi(G)$ intersects each of those components.
Choose any two points
\begin{align*}
  &d_{\beta} \in \varphi(G) \cap \bigl( q(\nb{X_{\beta}}) \setminus \seam{\beta} \bigr), &
  &d'_{\beta} \in \varphi(G) \cap \bigl( q(\nb{Y_{\beta}}) \setminus \seam{\beta} \bigr).
\end{align*}
belonging to distinct components of $\nbs{\beta} \setminus \seam{\beta}$.
In Figure~\ref{fig:path_h} such points are denoted by circles.

Let $J'$ be the set of isolated vertices of $G$.
Then every strip $S_{\alpha}$ with $\alpha\in J$ is not glued to any other strips, and no boundary intervals of $S$ are glued together.
Let also $K' = \{ s_{\alpha} = (0,0) = \varphi(\alpha) \in S_{\alpha} \mid \alpha \in J' \} \subset \str$ be the set of origins of such strips in $\str$.
Put
\[
    \cutset = K' \ \cup \ \{d_{\beta}, d'_{\beta} \mid \beta \in B\},
\]
and
\begin{align*}
    \cutGset &= \varphi^{-1}(\cutset), &
    \lbStr{\alpha} &:= \varphi^{-1}(\nbStr{\alpha}), &
    \lbs{\beta}    &:= \varphi^{-1}(\nbs{\beta}).
\end{align*}
Thus $\lbs{\beta}$ is an open arc in some $1$-cell of $G$ containing two points $\varphi^{-1}(d_{\beta})$ and $\varphi^{-1}(d'_{\beta})$, while $\lbStr{\alpha}$ is a ``star''-neighborhood of the vertex $\alpha\in G^{0} = A$ such that each edge of $\lbStr{\alpha}$ contains a unique point $\varphi^{-1}(d_{\beta})$ or $\varphi^{-1}(d'_{\beta})$ for some $\beta\in B$.

It follows that
\[ \coverU := \{ \lbStr{\alpha} \}_{\alpha\in A} \ \cup \ \{ \lbs{\beta}\}_{\beta\in B}\]
is an open cover of $G$, and $\varphi:(G,\cutGset, \coverU) \to (\str,\cutset, \coverV)$ induces a morphism in the category $\catC$.

\subsection*{Verification of conditions of Lemma~\ref{lem-3}}

\ref{enum:cov_lemma:1}
Evidently, the elements of $\coverU$ and $\coverV$ are even contractible and therefore they are simply connected.

\ref{enum:cov_lemma:2}
Since $\varphi$ is injective, it follows that for any subset $Q\subset \varphi(G) \subset \str$, we have that $\varphi|_{\varphi^{-1}(Q)}: \varphi^{-1}(Q) \to Q$ is a bijection.
In particular, so are the restrictions
\begin{align*}
&\varphi: \cutGset \cap \lbStr{\alpha}  \to \cutset \cap \nbStr{\alpha}, &
&\varphi: \cutGset \cap \lbs{\beta}  \to \cutset \cap \nbs{\beta}
\end{align*}
for each $\alpha\in A$ and $\beta\in B$.

\ref{enum:cov_lemma:3}
First notice that $\cutset$ (resp.~$\cutGset$) meets every path component of $\str$ (resp.~$G$).
Furthermore,
$\nbStr{\alpha} \cap \nbStr{\alpha'}  =  \nbs{\beta}  \cap  \nbs{\beta'} =
  \lbStr{\alpha} \cap \lbStr{\alpha'}  =  \lbs{\beta}  \cap  \lbs{\beta'}  =
 \varnothing$ for $\alpha \not=\alpha'\in A$ and $\beta \not=\beta'\in B$, which implies that all three-fold intersections of elements of $\coverV$ and $\coverU$ are empty.

Also, $\nbStr{\alpha} \cap \nbs{\beta}  \not=\varnothing$ iff either $X_{\beta}$ or $Y_{\beta}$ or both of them are contained in $\partial S_{\alpha}$.
In this case each connected component of $\nbStr{\alpha} \cap \nbs{\beta}$ contains either $d_{\beta}$ or $d'_{\beta}$.
It follows that each connected component of $\lbStr{\alpha} \cap \lbs{\beta}$ contains either $\varphi^{-1}(d_{\beta})$ or $\varphi^{-1}(d'_{\beta})$.
Thus $\cutset$ (resp.~$\cutGset$) meets all path components of all two-fold intersections of elements of $\coverV$ (resp.~$\coverU$).
Hence all conditions of Lemma~\ref{lem-3} holds, whence $\varphi_{*}$ is an isomorphism of groupoids.
\qed

\section{Proof of Theorem~\ref{th:phi_Pi1_iso}}\label{sect:proof:th:phi_Pi1_iso_1}
By Theorem~\ref{th:phi_Pi1_iso_groupoids}, $\varphi$ yields an isomorphism of fundamental groupoids.
In particular, if $x\in\cutGset$, then $\varphi$ also induces an isomorphism of the fundamental groups $\pi_1(G,x) \to \pi_1(\str,\varphi(x))$.
Since $\cutset$ (resp.~$\cutGset$) meets every path component of $\str$ (resp.~$G$), it follows that $\varphi$ induces isomorphism of fundamental groups are each point $x\in G$.

Notice that every connected component of $G$ is covered by at most countable tree, and therefore $G$ is aspherical.
Moreover, $\str$ is also aspherical by Lemma~\ref{lm:surf_prop}.
Hence $\varphi$ induces isomorphisms between all the corresponding homotopy groups of $G$ and $\str$.
Moreover, since $\str$ has the homotopy type of an infinite CW-complex, see Lemma~\ref{lm:surf_prop}, we have from the Whitehead theorem that $\varphi$ is a homotopy equivalence between all corresponding connected components of $G$ and $\str$.
\qed

\bibliographystyle{plain}
\bibliography{biblio}

\end{document}